\newcounter{mtheorem}
\newtheorem{mtheorem}[mtheorem]{Theorem}
\newcommand{\noopsort}[1]{}
\newcommand{\IR}{\mathbb{R}}
\newcommand{\lb}{\linebreak[1]}
\newcommand{\IC}{\mathbb{C}}
\newcommand{\IZ}{\mathbb{Z}}
\newcommand{\IF}{\mathbb{F}}
\newcommand{\NN}{\mathcal{N}}
\newcommand{\RR}{\mathcal{R}}
\newcommand{\XX}{\mathcal{X}}
\newcommand{\eps}{\varepsilon}
\DeclareMathOperator{\loc}{loc}
\DeclareMathOperator{\Ric}{Ric}
\DeclareMathOperator{\Rm}{Rm}
\DeclareMathOperator{\inj}{inj}
\DeclareMathOperator{\vol}{vol}
\newcommand{\EMPTY}[1]{}
\newtheorem{Lemma}[equation]{Lemma}
\newtheorem{Claim}[equation]{Claim}
\theoremstyle{definition}
\theoremstyle{remark}
\numberwithin{equation}{section}
\begin{document}

\title{A new complete two-dimensional shrinking gradient K\"ahler-Ricci soliton}
\date{\today}

\author{Richard H. Bamler}
\address{Department of Mathematics, University of California Berkeley, CA 94720, USA}
\email{rbamler@berkeley.edu}

\author{Charles Cifarelli}
\address{D\'{e}partement de Math\'{e}matiques, Universit\'{e} de Nantes, 2 rue de la Houssini\`{e}re,  BP 92208, 44322 Nantes cedex 03, France}
\email{charles.cifarelli@univ-nantes.fr}

\author{Ronan J.~Conlon}
\address{Department of Mathematical Sciences, The University of Texas at Dallas, Richardson, TX 75080}
\email{ronan.conlon@utdallas.edu}

\author{Alix Deruelle}
\address{Sorbonne Universit\'e and Universit\'e de Paris, CNRS, IMJ-PRG, F-75005 Paris, France}
\email{alix.deruelle@imj-prg.fr}

\date{\today}

\begin{abstract}
We prove the existence of a unique complete shrinking gradient K\"ahler-Ricci soliton with bounded scalar curvature on the blowup of $\mathbb{C}\times\mathbb{P}^{1}$ at one point. This
completes the classification of such solitons in two complex dimensions.
\end{abstract}

\maketitle

\section{Introduction}

\subsection{Overview}
Shrinking Ricci solitons are natural generalizations of Einstein manifolds with positive scalar curvature.
As such, their study and classification has become a central topic in both Riemannian and K\"ahler geometry.
Indeed, shrinking K\"ahler-Ricci solitons are known to exist on certain Fano manifolds that have obstructions to the existence of a K\"ahler-Einstein metric \cite{zhuu}.
Moreover in Ricci flow, shrinking Ricci solitons comprise candidate singularity models, thereby making their study crucial, for example,
in potentially implementing higher-dimensional surgery constructions.

While in (real) dimensions 2 and 3, a full classification of shrinking Ricci solitons has been achieved (these are Euclidean or quotients of spheres $S^2, S^3$ or the cylinder $S^2 \times \IR$) \cite{Hamilton_3_manifolds,Perelman2},
the situation is far from clear in (real) dimension 4.
Apart from the obvious examples ($\mathbb{R}^{4}$ or quotients of $S^4, S^3 \times \IR$, and $S^2 \times \IR^2$) and
the ten del Pezzo surfaces \cite{Tiant, zhuu}, the only other example in this dimension was found by Feldman, Ilmanen, and Knopf \cite{FIK} nearly $20$ years ago.
Its construction was, in part, possible due to its cohomogeneity one $U(2)$-symmetry,
which allowed the reduction of the soliton equation to a system of ODEs (the solution of which nevertheless still posed a non-trivial problem).

In this paper we prove the existence of a new complete non-compact shrinking K\"ahler-Ricci soliton in complex dimension~2.
This soliton is invariant under a real two-dimensional torus action, hence it has cohomogeneity~2.
Its underlying complex manifold is biholomorphic to the blowup of $\IC \times \mathbb{P}^1$ at one point,
a model that was already identified as the last remaining candidate in previous work \cite{ccd} of the latter three named authors.
We therefore complete the classification of shrinking K\"ahler-Ricci solitons in complex dimension~2.
In addition, we employ a novel approach, in which we construct the soliton indirectly as a blowup limit of a specific K\"ahler-Ricci flow on a compact manifold.
We use recent estimates obtained by the first author \cite{Bamler_HK_entropy_estimates,Bamler_RF_compactness,Bamler_HK_RF_partial_regularity} combined with K\"ahler-Ricci flow techniques to control the singularity formation of this flow.

\subsection{Main results}\label{main-results}
We begin by recalling the main definitions.

A \emph{Ricci soliton} is a triple $(M,\,g,\,X)$, where $M$ is a Riemannian manifold endowed with a complete Riemannian metric $g$
and a complete vector field $X$, such that
\begin{equation}\label{hot}
\Ric_{g}+\frac{1}{2}\mathcal{L}_{X}g=\lambda g
\end{equation}
for some $\lambda\in\mathbb{R}$. The vector field $X$ is called the
\emph{soliton vector field}. If $X=\nabla^{g} f$ for some smooth real-valued function $f$ on $M$,
then we say that $(M,\,g,\,X)$ (or $(M,\,g,\,f)$) is \emph{gradient}. In this case, the soliton equation \eqref{hot}
becomes
$$\Ric_{g}+\nabla^{2}f - \lambda g=0,$$
and we call $f$ the \emph{soliton potential}. In the case of gradient Ricci solitons, the completeness of $X$ is guaranteed by the completeness of $g$
\cite{Zhang-Com-Ricci-Sol}.

Let $(M,\,g,\,X)$ be a Ricci soliton. If $g$ is K\"ahler and $X$ is real holomorphic, then we say that $(M,\,g,\,X)$ is a \emph{K\"ahler-Ricci soliton}. Let $\omega$ denote the K\"ahler
form of $g$. If $(M,\,g,\,X)$ is in addition gradient, then \eqref{hot} may be rewritten as
\begin{equation*}
\rho_{\omega}+i\partial\bar{\partial}f=\lambda\omega,
\end{equation*}
where $\rho_{\omega}$ is the Ricci form of $\omega$ and $f$ is the soliton potential.

Finally, a Ricci soliton and a K\"ahler-Ricci soliton are called \emph{steady} if $\lambda=0$, \emph{expanding}
if $\lambda<0$, and \emph{shrinking} if $\lambda>0$ in \eqref{hot}.
One can always normalise $\lambda$, when non-zero, to satisfy $|\lambda|=1$. We henceforth assume that this is the case.

Our main result is now the following.

\begin{mtheorem}[Existence and uniqueness]\label{mainthm}
Up to automorphism, there exists a unique complete shrinking gradient K\"ahler-Ricci soliton with bounded scalar curvature on $\operatorname{Bl}_{x}(\mathbb{C} \times \mathbb{P}^1)$, that is, the blowup of $\mathbb{C}\times\mathbb{P}^{1}$ at a fixed point $x$ of the standard real torus action on $\mathbb{C}\times\mathbb{P}^{1}$. Moreover, this soliton is invariant under
the induced real torus action and appears as a parabolic blowup limit of the K\"ahler-Ricci flow.
\end{mtheorem}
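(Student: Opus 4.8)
The plan is to construct the soliton indirectly as a singularity model, rather than by the traditional ODE-reduction approach used by Feldman–Ilmanen–Knopf. The starting point is to fix a suitable compact K\"ahler manifold $\overline{M}$ — a torus-equivariant compactification of $\operatorname{Bl}_{x}(\IC\times\mathbb{P}^1)$, for instance a blowup of $\mathbb{P}^1\times\mathbb{P}^1$ or a Hirzebruch surface — together with an initial K\"ahler metric $g_0$ invariant under the $2$-torus $T$. One then runs the normalized K\"ahler-Ricci flow $\partial_t g_t = -\Ric_{g_t} + g_t$ starting from $g_0$. Because the underlying manifold is Fano or because the relevant cohomology class becomes degenerate in finite time, the flow develops a Type I singularity at some finite time $T_{\mathrm{sing}}$ localized near the exceptional locus, and the goal is to show that a parabolic rescaling of $(\overline{M}, g_t)$ based at an appropriate sequence of points and times converges to a complete shrinking gradient K\"ahler-Ricci soliton on the desired complex manifold.

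The key steps, in order, are as follows. \emph{First}, set up the equivariant K\"ahler-Ricci flow on $\overline{M}$ and identify the cohomological mechanism forcing finite-time singularity formation, using the standard fact that the flow exists as long as the class $[\omega_0] - t\,2\pi c_1$ stays K\"ahler; one arranges the geometry so that the limiting class is nef but not K\"ahler, collapsing exactly the curves that must be collapsed to produce $\operatorname{Bl}_x(\IC\times\mathbb{P}^1)$ in the blowup limit. \emph{Second}, establish the analytic estimates needed to take a blowup limit: a Type I curvature bound and Perelman-type noncollapsing near the singularity, then invoke the first author's compactness and partial-regularity theory \cite{Bamler_RF_compactness,Bamler_HK_RF_partial_regularity,Bamler_HK_entropy_estimates} to extract a smooth limiting ancient flow which, by the no-local-collapsing and entropy-monotonicity arguments, is a nonflat shrinking gradient Ricci soliton; the K\"ahler condition and the $T$-invariance pass to the limit. \emph{Third}, identify the complex structure of the limit: using the equivariant geometry, the localization of curvature, and a Morse-theoretic/moment-map analysis of the induced torus action on the limit, show that the underlying complex surface is biholomorphic to $\operatorname{Bl}_x(\IC\times\mathbb{P}^1)$ and not one of the already-classified models (e.g. by computing the structure of the fixed-point set and the topology at infinity, ruling out $\IC^2$, $\IC\times\mathbb{P}^1$, the FIK soliton on $\operatorname{Bl}_x\IC^2$, and the compact del Pezzo examples). \emph{Fourth}, prove that the scalar curvature of the limit soliton is bounded, which follows from the Type I bound together with the asymptotic analysis of shrinking K\"ahler-Ricci solitons with a torus symmetry. \emph{Fifth}, upgrade existence to uniqueness: invoke the classification results of \cite{ccd}, which reduce matters to this single model, together with a uniqueness statement for complete shrinking gradient K\"ahler-Ricci solitons with bounded scalar curvature in a fixed class on a fixed complex manifold — here one uses the equivariance and the structure theory of such solitons to show any two are related by an automorphism.

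The main obstacle I expect is the \emph{third} step: controlling the complex geometry of the blowup limit precisely enough to pin down its biholomorphism type. Bamler's compactness theory delivers a shrinking Ricci soliton with mild singularities and a priori only metric control, so one must work hard to show the limit is smooth (here the low dimension and the Kähler condition, via the partial regularity theorem ruling out codimension-$2$ singularities in this setting, should help), that it carries a holomorphic $T^2$-action, and that this action together with the asymptotic cone and the fixed-point data forces the complex manifold to be exactly $\operatorname{Bl}_x(\IC\times\mathbb{P}^1)$. A secondary difficulty is arranging the compact model and initial metric so that the singularity genuinely produces this manifold and not, say, an orbifold or a different blowup — this requires a careful choice dictated by the mass/moment-polytope bookkeeping of \cite{ccd}, and verifying that the relevant curve classes are contracted in the limit while the rest of the geometry opens up to a complete noncompact soliton. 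Once the limit is identified and shown to have bounded scalar curvature, the uniqueness half is comparatively routine given \cite{ccd}.
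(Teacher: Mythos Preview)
Your overall strategy --- realize the soliton as a parabolic blowup limit of a toric K\"ahler--Ricci flow on a compact surface, control the limit via Bamler's compactness/partial-regularity theory, and then identify it using \cite{ccd} --- matches the paper's. However, the order of the argument and the mechanism behind the two hardest steps are substantially different from what you outline, and your proposal is missing the key technical lemma that makes everything work.

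You treat the Type~I bound as a preliminary analytic input (``Second, establish \ldots\ a Type~I curvature bound''), after which the identification of the limit is the main difficulty. In the paper it is exactly the reverse: Type~I is the main theorem, proved by contradiction, and its proof already \emph{uses} the classification of two-dimensional shrinkers from \cite{ccd}. One assumes Type~II, takes a tangent flow at the bad point via \cite{Bamler_HK_RF_partial_regularity}, rules out orbifold singularities (see below), and obtains a smooth non-compact shrinker $(M',g')$. The contradiction then comes from the following lemma, which is entirely absent from your outline: if some rescaling $\lambda^{-2}g(t)$ is $\eps$-close to a model containing a holomorphic $\mathbb{P}^1$ of self-intersection $0$ or $-1$, then $\lambda \geq c\,(T-t)^{1/2}$. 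This is proved by perturbing the model curve to a genuine $J$-holomorphic curve $C\subset N$ and computing $\operatorname{vol}(C,g(t))$ cohomologically via $[\omega(t)]=[\omega(0)]-2\pi t\,c_1(N)$ and adjunction. Since by \cite{ccd} every candidate shrinker $(M',g')$ contains such a curve (or is asymptotic at large scale to $\mathbb{C}\times\mathbb{P}^1$, which does), this scale bound forces $\lambda_j\not\to 0$, contradicting the tangent-flow construction. Without this area/scale lemma you have no mechanism to exclude Type~II behaviour, and ``Perelman-type noncollapsing'' alone does not give it.

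Two further points. (i) Orbifold singularities in the Bamler limit are not ruled out by a general codimension argument as you suggest; the paper instead uses the toric structure of $N$: any local $\mathbb{R}^4/\Gamma$ model yields, after averaging the injectivity radius, a $T^2$-invariant hypersurface $\Sigma\cong S^3/\Gamma$ in $N$, and a moment-polytope inspection shows every such invariant hypersurface is $S^3$ or $S^2\times S^1$. (ii) Once Type~I is established and the blowup limit at a point of $E$ is a smooth shrinker, the identification does not proceed by analysing the torus action on the limit. Compactness is ruled out because $\operatorname{vol}(N,g(t))\to 0$ while $(T-t)^{-2}\operatorname{vol}(N,g(t))\not\to 0$; FIK is ruled out by \cite[Theorem~B]{ccd} and the same volume collapse; and $\mathbb{C}\times\mathbb{P}^1$ is ruled out by a direct intersection-theory computation: the perturbed $0$-curves $C_i\subset N$ pass through $q\in E$, so $[C_i].[E]>0$, yet the cohomological area computation forces $[C_i]$ to have vanishing $[D_2]$- and $[E]$-coefficients, a contradiction. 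Your ``Morse-theoretic/moment-map analysis of the induced torus action on the limit'' is not how the paper proceeds, and it is unclear it would succeed without the curve-area input.
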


The soliton of Theorem~\ref{mainthm} is constructed as a singularity model of a specific Ricci flow.
More precisely, we consider the blowup $N := \operatorname{Bl}_{x}(\mathbb{P}^{1} \times \mathbb{P}^1)$ of $\mathbb{P}^{1} \times \mathbb{P}^1$ at
one point and show that there is a toric K\"ahler-Ricci flow that contracts the exceptional divisor and exactly one other $(-1)$-curve at the singular time $T>0$.
The volume of $N$ close to the singular time is $\sim (T-t)$.
Using the estimates from \cite{Bamler_HK_RF_partial_regularity}, we analyze possible blowup models of this flow.
Thanks to the toricity of the flow and the topology of the underlying manifold, we are able to exclude orbifold singularities from appearing in the limit
and show that the singularity is close to a \emph{smooth} shrinking K\"ahler-Ricci soliton at most scales.
By \cite{ccd}, regions that are close enough to such K\"ahler-Ricci solitons must contain a complex curve of self-intersection $0$ or $-1$.
The areas of these curves, which are determined by the K\"ahler class of the evolving metric,
shrink by at most a linear rate. This observation allows us to bound from below the scales at which the flow exhibits closeness to a shrinking K\"ahler-Ricci soliton.
Consequently, we see that the flow must be Type I, i.e., we have curvature bounds of the form $|{\Rm}| \leq C/(T-t)$.
As a result, the singularity formation near every point can be described by a shrinking K\"ahler-Ricci soliton.
Among these, we are able to exclude the soliton of Feldman-Ilmanen-Knopf because the volume of $N$ under the flow converges to zero \cite{ccd}.
This only leaves $\IC \times \mathbb{P}^1$ and the candidate from Theorem~\ref{mainthm} as possible blowup limits.
Complex geometric reasons allow us to further rule out a $\IC \times \mathbb{P}^1$ forming near the exceptional divisor.
This demonstrates that an additional soliton, namely that characterized in \cite{ccd}, must exist.

To each complete shrinking K\"ahler-Ricci soliton $(M,\,g,\,X)$ satisfying \eqref{hot} with $\lambda=1$,
one obtains a solution of the K\"ahler-Ricci flow $\partial_t g(t)=-\operatorname{Ric}_{g(t)}$. Indeed, set $$g(t):=-t\varphi_{t}^{*}g,\quad t<0,$$
where $\varphi_{t}$ is a family of diffeomorphisms generated by the vector field $-\frac{1}{t}X$ with $\varphi_{-1}=\operatorname{id}$, i.e.,
\begin{equation*}
\frac{\partial\varphi_{t}}{\partial t}(x)=-\frac{X(\varphi_{t}(x))}{t},\quad\varphi_{-1}=\operatorname{id}.
\end{equation*}
Then $\partial_t g(t)=-\operatorname{Ric}_{g(t)}$ for $t<0$, and $g(-1)=g$. In this way,
non-flat complete shrinking gradient K\"ahler-Ricci solitons with bounded curvature appear as parabolic rescalings
of finite time Type I singularities of the K\"ahler-Ricci flow \cite{Sesum_conv_to_soliton,Cao_Zhang_conj_h_e,topping, naber}.
In other words, they are models for such singularities and hence this motivates their classification. Theorem \ref{mainthm} completes such a
classification in two complex dimensions.

Indeed, a complete two-dimensional shrinking gradient K\"ahler-Ricci soliton with bounded curvature is either compact, in which case the underlying manifold
 is Fano and the resulting soliton is, up to automorphism \cite{tianzhu1}, K\"ahler-Einstein or the shrinking gradient K\"ahler-Ricci soliton given by \cite{soliton} depending on the Fano manifold in question, or is non-compact
with bounded scalar curvature. Shrinking gradient K\"ahler-Ricci solitons are connected at infinity \cite{munteanu} and in this latter case, there is a dichotomy in the
sense that the scalar curvature of the soliton either tends to zero along every integral curve of $X$, or $X$ has an integral curve along which the scalar curvature
does not tend to zero. In the former case, it follows that the scalar curvature tends to zero globally (cf.~\cite[Lemma 2.7]{ccd}) and hence the soliton is (up to automorphism) either that of Feldman-Ilmanen-Knopf \cite{FIK} on the blowup of $\mathbb{C}^{2}$ at one point or the flat Gaussian shrinking soliton on $\mathbb{C}^{2}$ \cite{cds}. In the latter case, the shrinking soliton is either isometric to the cylinder $\mathbb{C}\times\mathbb{P}^{1}$ or to the shrinking soliton of Theorem \ref{mainthm}.

\begin{mtheorem}[Classification of two-dimensional K\"ahler shrinkers]\label{mainthm2}
Let $(M,\,g,\,X)$ be a complete two-dimensional shrinking gradient K\"ahler-Ricci soliton with bounded scalar curvature. Then either:
\begin{enumerate}[label=\textnormal{(\roman{*})}, ref=(\roman{*})]
  \item $M$ is Fano and $g$ is, up to automorphism, either K\"ahler-Einstein or the shrinking gradient K\"ahler-Ricci soliton on $M$ given by \cite{soliton}, or:
   \item $(M,\,g,\,X)$ is, up to pullback by an element of $GL(2,\,\mathbb{C})$, the flat Gaussian shrinking soliton on $\mathbb{C}^{2}$, or:
   \item $(M,\,g,\,X)$ is, up to pullback by an element of $GL(2,\,\mathbb{C})$, the unique $U(2)$-invariant shrinking gradient K\"ahler-Ricci soliton
of Feldman-Ilmanen-Knopf \cite{FIK} on the total space of $\mathcal{O}(-1)$ over $\mathbb{P}^{1}$, or:
\item $(M,\,g,\,X)$ is, up to automorphism, the cylinder $\mathbb{C}\times\mathbb{P}^{1}$, or:
\item $(M,\,g,\,X)$ is, up to automorphism, the shrinking gradient K\"ahler-Ricci soliton of Theorem \ref{mainthm}.
\end{enumerate}
\end{mtheorem}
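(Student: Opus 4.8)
The plan is to derive Theorem~\ref{mainthm2} by combining Theorem~\ref{mainthm} with the partial classification of \cite{ccd} and the rigidity results already present in the literature; once Theorem~\ref{mainthm} is in hand, essentially no new analysis is required. I would first separate the compact and non-compact cases. If $M$ is compact, then the gradient soliton identity $\rho_{\omega}+i\partial\bar{\partial}f=\omega$ shows that $c_1(M)=\tfrac{1}{2\pi}[\omega]$ is a positive class, so $M$ is a del Pezzo surface. By the uniqueness theorem of \cite{tianzhu1}, a shrinking gradient K\"ahler--Ricci soliton on a fixed Fano manifold is unique up to automorphism; hence on each del Pezzo surface $(M,g)$ is the K\"ahler--Einstein metric when one exists, and it is the soliton of \cite{soliton} on the two K\"ahler--Einstein-obstructed surfaces, namely the one- and two-point blowups of $\mathbb{P}^2$. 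This is alternative~(i).

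From now on $M$ is non-compact with bounded scalar curvature $R$. Either $R\to 0$ along every integral curve of $X$, or $X$ has an integral curve along which $R$ does not tend to $0$. In the first case, connectedness at infinity \cite{munteanu} together with \cite[Lemma 2.7]{ccd} upgrades the hypothesis to $R\to 0$ globally, and then the classification of two-dimensional shrinking gradient K\"ahler--Ricci solitons whose scalar curvature decays to zero --- \cite{cds} in the flat case and \cite{FIK} otherwise --- shows that $(M,g,X)$ is, up to pullback by an element of $GL(2,\IC)$, either the flat Gaussian soliton on $\IC^2$ or the Feldman--Ilmanen--Knopf soliton on the total space of $\mathcal{O}(-1)$ over $\mathbb{P}^1$. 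This gives alternatives~(ii) and~(iii).

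The remaining case is where the new contribution enters. By the structure theory of \cite{ccd}, a non-compact two-dimensional shrinking gradient K\"ahler--Ricci soliton with bounded scalar curvature that does not decay to zero is supported on a complex surface containing a smooth rational curve of self-intersection $0$ or $-1$, and the only two such surfaces that can occur are $\IC\times\mathbb{P}^1$ and $\operatorname{Bl}_x(\IC\times\mathbb{P}^1)$. On $\IC\times\mathbb{P}^1$ the product cylinder is the unique such soliton up to automorphism --- alternative~(iv) --- while on $\operatorname{Bl}_x(\IC\times\mathbb{P}^1)$ Theorem~\ref{mainthm} furnishes a soliton and asserts its uniqueness up to automorphism --- alternative~(v). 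The five cases are pairwise exclusive: compact versus non-compact, $R$ decaying to zero versus not, and the surfaces $\operatorname{Bl}_0\IC^2$, $\IC\times\mathbb{P}^1$, $\operatorname{Bl}_x(\IC\times\mathbb{P}^1)$ are pairwise non-biholomorphic. Hence the list is exhaustive.

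The main obstacle is concentrated entirely in Theorem~\ref{mainthm}: one must show that the candidate on $\operatorname{Bl}_x(\IC\times\mathbb{P}^1)$, singled out but not constructed in \cite{ccd}, genuinely exists and is unique. Granting that, the proof of Theorem~\ref{mainthm2} is the bookkeeping above; the only points requiring care are the verification that the step from ``$R\to 0$ along every integral curve'' to ``$R\to 0$ globally'' is valid --- this is precisely where \cite{munteanu} and \cite[Lemma 2.7]{ccd} are used --- and that no soliton slips through the interface between the two branches of the dichotomy, which is ruled out because the two branches are exactly complementary and the underlying surfaces on the two sides are distinct.
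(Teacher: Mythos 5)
Your proposal is correct and follows essentially the same route as the paper: the authors likewise split into compact versus non-compact, invoke \cite{tianzhu1} and \cite{soliton} in the Fano case, use connectedness at infinity \cite{munteanu} and \cite[Lemma 2.7]{ccd} to upgrade the scalar-curvature dichotomy, cite \cite{cds} and \cite{FIK} when $R\to 0$, and reduce the remaining case to the cylinder or the soliton of Theorem~\ref{mainthm} via the structure theory of \cite{ccd}. The only substantive content, as you note, is Theorem~\ref{mainthm} itself.
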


Thus, we are left with the following picture. Let $(M,\,g(t))_{t\in[0,\,T)}$ be a K\"ahler-Ricci flow developing a finite Type I singularity when $t=T>0$.
Take a blowup limit $g_{\infty}(t)$ of the rescaled flows $g_j (t) := \lambda^{-2}_j g(T+\nolinebreak\lambda^{2}_jt)$, $t\in[-\lambda^{-2}_{j}T,\,0),$ for some sequence $\lambda_j \to 0$, centered at a point $x \in M$ where the curvature blows up. If $\lim_{t\to T^{-}}\operatorname{vol}(M,\,g(t))>0$, then \cite[Theorem B]{ccd} asserts that this blowup limit is the Feldman-Ilmanen-Knopf shrinking soliton on the blowup of $\mathbb{C}^{2}$ at one point. On the other hand, if there is finite time collapsing at $t=T>0$, i.e., if $\lim_{t\to T^{-}}\operatorname{vol}(M,\,g(t))=0$, then either $\lim_{t\to T^{-}}\operatorname{diam}(M,\,g(t))=0$, which is a ``finite time extinction'', or $\limsup_{t\to T^{-}}\operatorname{diam}(M,\,g(t))>0$. In the former case, \cite{tosatti10} (see also \cite{song23}) asserts that $M$ is Fano and the K\"ahler class of $g(0)$ lies in a positive multiple of $c_{1}(M)$. The work of
Perelman (see \cite{sesum1}) gives us the upper bound $\operatorname{diam}(M,\,g(t))\leq C(T-t)^{\frac{1}{2}}$, which,
for the re-scaled limit $g_{\infty}(t)$, $t<0$, translates as $\operatorname{diam}(M,\,g_{\infty}(t))\leq C(-t)^{\frac{1}{2}},\,t<0$. This latter bound implies that
the rescaled limit is compact, hence being a shrinking soliton, is Fano with its (up to automorphism \cite{tianzhu1}) unique shrinking soliton structure. In the latter case,
the blowup limit cannot be Fano as the compactness of such a manifold would imply that
$\lim_{t\to T^{-}}\operatorname{diam}(M,\,g(t))=0$, a contradiction. By \cite[Theorem B]{ccd}, the blowup limit cannot be
the shrinking soliton of Feldman-Ilmanen-Knopf \cite{FIK}. Hence the only possibility is that the blowup limit is the cylinder
$\mathbb{C}\times\mathbb{P}^{1}$ or the soliton of Theorem \ref{mainthm}, where the latter model corresponds to the contraction of a $(-1)$-curve.

\subsection{Acknowledgements}
The authors wish to thank Song Sun for useful discussions. The first author is supported by NSF grant DMS-1906500.
The second author is supported by the grant Connect Talent ``COCOSYM’’ of the r\'{e}gion des Pays de la Loire and the Centre Henri Lebesgue,  programme ANR-11-LABX-0020-0, the third author is supported by NSF grant DMS-1906466, and the fourth author is supported by grants ANR-17-CE40-0034 of the French National Research Agency ANR (Project CCEM) and ANR-AAPG2020 (Project PARAPLUI).

\section{Proof of Theorem \ref{mainthm}}
Let $\sigma:N=\operatorname{Bl}_{x}(\mathbb{P}^{1} \times \mathbb{P}^1)\to\mathbb{P}^{1} \times \mathbb{P}^{1}$ denote the blowup of $\mathbb{P}^{1} \times \mathbb{P}^{1}$ at
one of the four fixed points $x$ of the standard real torus action on $\mathbb{P}^{1} \times \mathbb{P}^{1}$,
let $E \subset N$ denote the corresponding exceptional divisor, and write $J$ for the complex
structure of $N$. Denote the two fixed points on each factor by $p,\,q\in \mathbb{P}^1$, so that
without loss of generality we can take $x=(p,\,p)\in \mathbb{P}^{1} \times \mathbb{P}^{1}$. Consider $N$ endowed with the induced action of a real torus $T^{2}$
and set $D_1 = \sigma^{-1}(\mathbb{P}^{1} \times \{ q \})$ and $D_2 = \sigma^{-1}(\{ q \} \times \mathbb{P}^{1} )$ so that $D_i$ are torus-invariant divisors in $N$ satisfying
$[D_{i}]^2 = 0$, $[D_{1}].[D_2] = 1$, and $[D_{i}]. [E] = 0$. Then we can choose a K\"ahler metric $g_0$ on $N$ invariant under the torus action such that for the associated K\"ahler-Ricci flow $(g(t))_{t \in [0,T)}$, $T>0$, we have
\[ \operatorname{vol}(D_{1}, g(t)),\,\,\operatorname{vol}(E, g(t)) \xrightarrow[t \to T]{} 0, \quad \operatorname{vol}(D_{2}, g(t)) \not\longrightarrow 0. \]
Indeed, $g(0)$ can be chosen as follows. $H^{1,\,1}(X)=\mathbb{R}^{3}$ is generated by the Poincar\'e duals to $D_{1}$,\,$D_{2}$,\, and $E$.
Using the Nakai-Moishezon criterion for positivity, a computation shows that the K\"ahler cone $\mathcal{K}_N$ of $N$ is given by
\begin{equation*}
	\mathcal{K}_{N} = \left\{ a_1[D_{1}] + a_2[D_{2}] - b[E] \: | \: a_1 > b> 0,\,\, a_2> b> 0 \right\}.
\end{equation*}
Since for any inital K\"ahler $g(0)$, the K\"ahler class of the evolving K\"ahler form $\omega(t)$ satisfies
$$\partial_{t}[\omega(t)]=-[\rho_{\omega(t)}]=-2\pi c_{1}(N),$$
we find that
\begin{equation}\label{hermosa}
  [\omega(t)]=[\omega(0)]-2t\pi c_{1}(N).
\end{equation}
The first Chern class $c_{1}(N)$ of $N$ is given by $2\pi c_1(N) = 2[D_{1}] + 2[D_{2}] - [E]$.
Hence for any initial K\"ahler
form $\omega(0)$ in the K\"ahler class $a_1 [D_{1}] + a_2 [D_{2}] - b[E]\in\mathcal{K}_{N}$, we find that
\begin{equation}\label{kahlerclass}
	[\omega(t)] = (a_1 - 2 t) [D_{1}] + (a_2 - 2 t) [D_{2}]- (b - t) [E].
\end{equation}
In particular, the volume of $D_i,\,i=1,\,2,$ is given by
\begin{equation*}
\begin{split}
	\operatorname{vol}(D_{i}, \omega(t)) &= \int_{D_{i}} [\omega(t)]  = (a_{3 -i} - 2t)[D_{1}].[D_{2}]=a_{3-i} - 2t,
\end{split}
\end{equation*}
whereas the volume of $E$ is given by
\begin{equation*}
\begin{split}
	\operatorname{vol}(E, \omega(t)) &= \int_{E} [\omega(t)]  = -(b-t)[E]^{2}= b-t.
\end{split}
\end{equation*}
Therefore it suffices to choose $g(0)$ to lie in a K\"ahler class with $a_{2}=2b$ and $a_{1}>a_{2}$.
The singular time $T>0$ of the flow is then given by $T=b=\frac{a_{2}}{2}$.
By averaging, we can guarantee that $g(0)$ is invariant under the torus action.

Our Ricci flow has the following property.

\begin{Lemma}\label{volume}
$\vol (M,\,g(t)) \longrightarrow 0$ and $(T-t)^{-2} \vol (M,\,g(t)) \not\longrightarrow 0$.
\end{Lemma}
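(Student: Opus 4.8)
The plan is to compute the total volume $\vol(M,g(t))$ directly from the K\"ahler class \eqref{kahlerclass} by integrating $\tfrac{1}{2}[\omega(t)]^2$ over $N$, and then to compare the resulting polynomial in $t$ with the two assertions. First I would record the intersection numbers on $N$: since $[D_i]^2 = 0$, $[D_1].[D_2] = 1$, $[D_i].[E] = 0$, and $[E]^2 = -1$, one obtains from \eqref{kahlerclass} that
\begin{equation*}
\vol(M,g(t)) = \tfrac{1}{2}\int_N [\omega(t)]^2 = (a_1 - 2t)(a_2 - 2t) - \tfrac{1}{2}(b-t)^2.
\end{equation*}
Substituting the normalization $a_2 = 2b$, $T = b$, this becomes $(a_1 - 2t)(2b - 2t) - \tfrac12(b-t)^2 = 2(a_1 - 2t)(b - t) - \tfrac12 (b-t)^2 = (b-t)\big(2a_1 - 4t - \tfrac12(b - t)\big)$. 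Evaluating at $t = T = b$ gives $\vol(M,g(T)) = 0$, which proves the first assertion $\vol(M,g(t)) \to 0$ as $t \to T$.

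For the second assertion I would isolate the factor $(T - t) = (b-t)$ and examine the behavior of $(T-t)^{-2}\vol(M,g(t))$. From the factorization above,
\begin{equation*}
(T-t)^{-2}\vol(M,g(t)) = (b-t)^{-1}\Big(2a_1 - 4t - \tfrac12(b-t)\Big),
\end{equation*}
and since $a_1 > a_2 = 2b > b = T$, the bracketed quantity tends to $2a_1 - 4b - 0 = 2(a_1 - 2b) = 2(a_1 - a_2) > 0$ as $t \to T^-$, while $(b-t)^{-1} \to +\infty$. Hence $(T-t)^{-2}\vol(M,g(t)) \to +\infty$, and in particular it does not tend to $0$. (Alternatively, one may simply note that the leading order of $\vol(M,g(t))$ near $t=T$ is $\sim (a_1 - a_2)\cdot(T-t)$, i.e.\ linear rather than quadratic in $(T-t)$, which is exactly the ``volume $\sim (T-t)$'' behavior flagged in the introduction; dividing by $(T-t)^2$ therefore blows up.)

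There is essentially no obstacle here: the statement is a purely cohomological computation, reducing to the intersection theory on $N$ already assembled in the preceding paragraphs together with the class evolution \eqref{hermosa}. The only point requiring a moment of care is the choice of normalization $a_2 = 2b$ (equivalently $T = b = a_2/2 < a_1/2$), which guarantees that at the singular time the $[D_1]$-coefficient $a_1 - 2T = a_1 - a_2$ is still strictly positive; this is what forces the linear-in-$(T-t)$ behavior of the volume and thus what makes $(T-t)^{-2}\vol(M,g(t))$ diverge rather than converge. One should also confirm that $g(t)$ remains K\"ahler, i.e.\ that $[\omega(t)] \in \mathcal{K}_N$, for all $t \in [0,T)$: with $a_2 = 2b$ the conditions $a_1 - 2t > b - t > 0$ and $a_2 - 2t > b - t > 0$ read $a_1 - t > b$ and $b > t$, both valid for $t < b = T$ given $a_1 > a_2 = 2b$, so the flow is genuinely defined up to time $T$ and the computation is legitimate.
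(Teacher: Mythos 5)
Your proposal is correct and follows essentially the same route as the paper: both compute $\vol(M,g(t)) = (a_1-2t)(a_2-2t)-\tfrac12(b-t)^2$ from the intersection numbers and then observe that, since $a_1/2 > a_2/2 = T$, the volume vanishes linearly (not quadratically) in $T-t$. Your extra check that $[\omega(t)]$ stays in $\mathcal{K}_N$ for $t<T$ is a sensible sanity check but not needed for the lemma itself.
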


\begin{proof}
	The volume $\vol (M,\,g(t))$ is given by
		\begin{equation*}
			\vol (M,\,g(t)) = \frac{1}{2} \int_M [\omega(t)]^2 = (a_1 - 2t)(a_2 - 2t) - \frac{1}{2} (b-t)^2 = 4 \left( \frac{a_1}2-t \right)(T-t) - \frac12 (T-t)^2.
		\end{equation*}
		The lemma now follows since $\frac{a_1}2 > \frac{a_2}2 = T$.
\end{proof}

\begin{Lemma} \label{Lem_no_cone}
Let $\Gamma \subset O(4)$ be a non-trivial, finite group acting freely on $S^3$.
Then the following cannot occur:
There are sequences $t_i \in [0,T)$, $\lambda_i > 0$, an exhaustion $U_1 \subset U_2 \subset \ldots \subset (\IR^4 - \{ 0\})/\Gamma$, and diffeomorphisms $\psi_i : U_i \to \psi_i(U_i) = V_i \subset M$ such that $\lambda_i^2 \psi_i^* g(t_i)$ converges locally smoothly to the Euclidean metric on $\IR^4/\Gamma$.
\end{Lemma}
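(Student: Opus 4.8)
The plan is to argue by contradiction: use the scaling behaviour of the flow to locate where such a cone could form, and then produce a complex-geometric obstruction from the fact that $N=\operatorname{Bl}_x(\mathbb{P}^1\times\mathbb{P}^1)$ is a del Pezzo surface — so it carries no irreducible curve of self-intersection $\le-2$.

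\emph{Locating the scale.} Suppose such sequences exist; after passing to a subsequence, $t_i\to\bar t\in[0,T]$ and $\lambda_i\to\bar\lambda\in[0,\infty]$. By \eqref{kahlerclass} the K\"ahler classes converge as $t\to T$ to $(a_1-a_2)[D_1]$, a semi-ample class pulled back from the conic-bundle contraction $N\to\mathbb{P}^1$ defined by $|D_1|$, so the flow collapses onto $\mathbb{P}^1$ (in accordance with Lemma~\ref{volume}), with $\diam(M,g(t))$ uniformly bounded on $[0,T)$. Testing this against the embedded annuli $\psi_i(\{i^{-1}\le|x|\le i\}/\Gamma)$, whose $\lambda_i^2 g(t_i)$-diameter tends to $\infty$, rules out $\bar\lambda<\infty$; and if $\bar t<T$ then $g(\cdot)$ has uniformly bounded curvature and injectivity radius near $\bar t$, so $\inj_{\lambda_i^2 g(t_i)}\to\infty$ pointwise, contradicting the decay of the injectivity radius near the tip of $\IR^4/\Gamma$ (apply this at points of $U_i$ at distance $\sim i^{-1}$ from the tip). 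Hence $\bar t=T$ and $\bar\lambda=\infty$, and we are looking at a genuine parabolic blowup at the singular time.

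\emph{Cutting along the link and finding the forced curve.} For $i$ large, $U_i$ contains $A:=\{\tfrac12\le|x|\le2\}/\Gamma$, so $L_i:=\psi_i(\{|x|=1\}/\Gamma)$ is an embedded copy of the spherical space form $L=S^3/\Gamma$, near which $\lambda_i^2 g(t_i)$ is $C^k$-close to the round model of radius $1$. As $M$ is simply connected, $H_3(M;\IZ/2)=0$, so $L_i$ separates $M$; let $W_i$ be the closure of the component on the ``tip side'', a compact complex surface with boundary $L$ (the complex structure $J$ of the flow is fixed). Blowup limits of the K\"ahler-Ricci flow are K\"ahler, so $\IR^4/\Gamma$ is a K\"ahler cone: $\Gamma$ is conjugate into $U(2)$ and $\IC^2/\Gamma$ is a two-dimensional quotient singularity (and toricity of the flow, applied at the point of curvature concentration, forces $\Gamma$ cyclic, $L$ a lens space, if one wishes). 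The curvature of $g(t_i)$ cannot remain $\lesssim\lambda_i^2$ throughout $W_i$ — otherwise $W_i$ would subconverge to a complete flat manifold with boundary the round $L$, which does not exist — so there is a scale inside $W_i$ at which the curvature is comparable to it. Running the compactness and partial-regularity theory of \cite{Bamler_RF_compactness,Bamler_HK_RF_partial_regularity} there, the flow near that point is, for $i$ large, modelled on a complete non-collapsed two-dimensional K\"ahler space (a shrinking soliton or a Ricci-flat ALE space) with a single end asymptotic to $\IC^2/\Gamma$; its underlying complex surface is then a resolution of $\IC^2/\Gamma$, and hence $W_i\subset N$ contains an irreducible holomorphic curve $C$ with $C^2\le-2$ (one may also check $C\cdot[D_1]=0$ using the diameter bound). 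But $-K_N=2[D_1]+2[D_2]-[E]\in\mathcal{K}_N$, so $N$ is del Pezzo, and every irreducible curve satisfies $C^2=-K_N\cdot C+2p_a(C)-2\ge-1$. This contradiction proves the lemma.

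\emph{Where the difficulty lies.} The delicate point is the last step: upgrading the purely metric statement ``$g(t_i)$ looks like the flat cone $\IR^4/\Gamma$ outside a compact region $W_i$'' to the complex-analytic statement ``$W_i$ is, up to biholomorphism, a resolution of $\IC^2/\Gamma$, hence contains an irreducible negative curve of self-intersection $\le-2$''. This is exactly where Bamler's singularity theory is needed — to extract a well-behaved limit flow at the correct interior scale of $W_i$ whose asymptotic geometry is the cone — together with the rigidity of the two-dimensional complete K\"ahler spaces that can arise in this way (compare the classification input from \cite{ccd}) and some bimeromorphic bookkeeping to identify that limit with a resolution. By contrast the scaling dichotomy, the topological separation, and the del Pezzo computation are routine.
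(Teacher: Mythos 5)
Your proposal takes a genuinely different route from the paper, and its central step has a real gap. The paper's proof never analyzes the region ``inside'' the cone at all. Instead it exploits the $T^2$-symmetry: the mollified injectivity radius of $g(t_i)$ is automatically $T^2$-invariant (no equivariance of $\psi_i$ is needed), so a suitable regular level set produces an embedded, $T^2$-invariant, fixed-point-free hypersurface $\Sigma_i\cong S^3/\Gamma$. Its moment-map image is an arc in the pentagon $P$ with endpoints in the interiors of two facets, and a case analysis on the pair of inner normals (parallel facets give $S^2\times S^1$; independent normals give a Delzant cone, hence $S^3$, possibly after blowing down an intermediate $(-1)$-facet) shows $\Sigma_i$ is diffeomorphic to $S^3$ or $S^2\times S^1$, contradicting $\Gamma\neq 1$. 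This is purely topological/toric and requires no PDE input beyond the smooth convergence in the hypothesis.

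The gap in your argument is the step ``the blowup limit inside $W_i$ is a resolution of $\IC^2/\Gamma$, hence $W_i$ contains an irreducible holomorphic curve with $C^2\le -2$.'' You flag this as the delicate point, but it is not merely hard --- as stated it can fail. Granting the (itself nontrivial) bubbling analysis that produces a complete, non-collapsed, K\"ahler Ricci-flat ALE space or shrinker asymptotic to $\IC^2/\Gamma$, such a space need not be biholomorphic to a resolution: for $\Gamma\subset SU(2)$ the Kronheimer ALE spaces may carry the complex structure of a \emph{smoothing} of the ADE singularity (e.g.\ the Eguchi--Hanson manifold with the complex structure of the affine quadric $\{x^2+y^2+z^2=1\}$), which is Stein and contains no compact holomorphic curves whatsoever. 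Such a surface embeds holomorphically into a del Pezzo surface (the affine quadric sits inside $\mathbb{P}^1\times\mathbb{P}^1$), so your del Pezzo obstruction produces no contradiction in that case; note also that the intersection form of $N$ does represent $-2$, so no purely topological version of the obstruction is available either. Ruling out the deformation case is exactly where extra input --- in the paper, the torus symmetry --- is required. Two smaller issues: your ``locating the scale'' step invokes a uniform diameter bound for this collapsing K\"ahler--Ricci flow that is neither proved in the paper nor obvious (though that whole step is dispensable), and the extraction of a single well-defined bubble at ``the'' interior curvature scale of $W_i$ glosses over possible multi-scale/bubble-tree behavior. The concluding del Pezzo computation ($C^2=-K_N\cdot C+2p_a(C)-2\ge -1$ for irreducible $C$) and the separation argument via $H_3(N;\IZ)=0$ are fine.
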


Note that we don't impose any $T^2$-equivariance assumptions on $\psi_i$ in Lemma~\ref{Lem_no_cone}.

\begin{proof}
Suppose that there were such sequences.
Set $g_i := \lambda_i^2 g(t_i)$ and fix a cutoff function $\eta : [0,\infty) \to [0,1]$ with $\eta \equiv 1$ near $0$ and $\eta \equiv 0$ on $[1,\infty)$.
Let $\inj_{g_i} : N \to \IR_+$ denote the injectivity radius with respect to $g_i$ and
define $(u_{\eps} \in C^0(N))_{\eps > 0}$ by
\[ u_{\eps}(p) := \int_N \inj_{g_i} (q) \eta(d(p,q)/\eps) dg_i(q) \bigg/ \int_N  \eta(d(p,q)/\eps) dg_i(q). \]
Next note that $\inj_{g_i} \circ \psi_i$ converges locally uniformly to the injectivity radius on $(\IR^4 - \{ 0\})/\Gamma$, which itself is linear in the radial coordinate.
So if $W := A(0,\tfrac12,2)/\Gamma$ denotes an annulus around the origin in $\IR^4/\Gamma$, then for small enough $\eps > 0$ we have smooth convergence of $(u_\eps \circ \psi_i)|_W$ to a radial function on $W$ without critical points.
So for large $i$, the function $u_\eps$ has a regular level set that contains a component $\Sigma_i \subset M$ diffeomorphic to $S^3/\Gamma$.
By construction, $\Sigma_i$ is invariant under the $T^2$-action on $N$. Since we may choose $\Sigma_i$ to be an arbitrary level set of $u_\eps$, we may assume in addition that the $T^2$-action on $\Sigma_i$ has no fixed point.

Consider now the moment map $\mu : N \to\mathbb{R}^{2}$ of the real $T^{2}$-action, which exists because $N$ is simply connected. As is well-known,
$\mu(N)$ can be realised as a convex polytope $P\subseteq\mathbb{R}^{2}$ with vertices $(-1,0),\,(-1,1),\,(1,1), (1,-1),\,(0,-1)$, with the facet joining $(-1,\,0)$ and $(0,\,-1)$
corresponding to $E$. $P$ is the orbit space of the $T^{2}$-action. Indeed, the pre-image under $\mu$ of points in the interior of $P$ are generic $T^{2}$-orbits, the pre-image under $\mu$ of points along the facets are $S^{1}$-orbits, and the pre-images of the vertices are points, corresponding to fixed points of the $T^{2}$-action.
The image $\mu (\Sigma_i)\subseteq P$ defines a smooth path whose endpoints lie in the facets of $P$; it is not closed, because otherwise $\pi_1(\Sigma_i)$ would be infinite.

Let $F_1, F_2$ denote the two (not necessarily distinct) facets of $P$ which contain the endpoints of $\mu(\Sigma_i)$.
Since $\Sigma_i$ does not contain any fixed point of the torus action, the two endpoints of $\mu(\Sigma_i)$ must lie in the interior of $F_{1}$ and $F_{2}$.
These facets are defined by the linear equalities $\langle \nu_j, x \rangle = -1$, where $\nu_j \in \{ \pm (1,\,0), \,  \pm (0,\,1),  \, (1,\,1) \}$.
First suppose that $F_1$ and $F_2$ are parallel. Then the moment map $\mu$ contracts at each endpoint the same $S^1$-factor of the $T^{2}$-fibration in the interior
so that $\Sigma_i$ defines a trivial $S^1$-bundle over $S^2$, i.e., $\Sigma_i$ must be diffeomorphic to $S^2 \times S^1$. This contradicts the fact that $\Sigma_i \cong S^3/\Gamma$.
Next, suppose that the normals $\nu_1, \nu_2$ are linearly independent. Then running through the various possibilities for $\nu_j$,
we see that the cone $C=\{ x \in \IR^2 \: | \: \langle \nu_j, x \rangle \geq 0, \, j = 1,2 \}\subseteq\mathbb{R}^{2}$
satisfies the Delzant condition \cite[Definition 7]{charlie} so that the complex cone defined by $C$
is biholomorphic to $\mathbb{C}^{2}$. As such, $\Sigma_{i}$ must be diffeomorphic to $S^{3}$, thereby contradicting our assumption that $\Gamma$ is non-trivial.
Notice that $N$ has precisely three invariant $(-1)$-curves, namely those corresponding to the facets of $P$ with inner normals
 $(1,0), \, (1,1)$, and $(0,1)$, respectively. Therefore if $F_1, F_2$ are not consecutive, then they must be separated by some facet of $P$ corresponding to one of these $(-1)$-cuves. Blowing down this
 particular curve, we see that $\Sigma_i$ descends to a hypersurface $\tilde{\Sigma}_i$ inside a toric surface $ \widetilde{N}$ biholomorphic to either
 $\mathbb{P}^1 \times \mathbb{P}^1$ or to the blowup of $\mathbb{P}^2$ at one point.
  Choosing an invariant K\"ahler metric on $\widetilde{N}$ in an appropriate cohomology class, we obtain a new moment map $\tilde{\mu}:\widetilde{N} \to \IR^2$
  with image a convex polytope $\tilde{P}$ such that the endpoints of $\tilde{\mu}(\tilde{\Sigma}_i)$ lie in consecutive edges $\widetilde{F}_1, \widetilde{F}_2$ of $\tilde{P}$
  with the same inner normals as $F_1, F_2,$ respectively. The previous argument then gives that $\tilde{\Sigma}_i$, and hence $\Sigma_i$, is again diffeomorphic to $S^3$,
  a contradiction.

Alternatively, one may conclude the argument in the following way. If $\Sigma_{i}\cap E = \emptyset$, then $\Sigma_{i}$ descends to a hypersurface $\Sigma_{i}'
\subset\mathbb{P}^{1}\times\mathbb{P}^{1}$ invariant under the torus action, which must either be an invariant $S^3$ or an $S^{2}\times S^{1}$
in $\mathbb{P}^{1}\times\mathbb{P}^{1}$. If $\Sigma_{i}\cap E \neq \emptyset$, then $\Sigma_{i}$
intersects one of $D_1, D_2$, the strict transform of $\mathbb{P}^{1}\times\{p\}$ or $\{p\} \times\mathbb{P}^{1}$, or $E$ itself in a separate point.
As we have seen, if $\Sigma_i$ intersects $E$ in two distinct points, then $\Sigma_i \cong S^2 \times S^1$, contradicting the fact that $\Sigma_i \cong S^3/\Gamma$.
Suppose that $\Sigma_i$ meets either $D_1$ or $D_2$. By symmetry, we may assume without loss of generality that $\Sigma_i \cap D_2 \neq \emptyset$. Then $\Sigma_{i}$
is diffeomorphic to the boundary of a small tubular neighborhood of the strict transform of
$\mathbb{P}^{1}\times\{p\}$ in $N$, which has self-intersection $-1$. Thus $\Sigma_{i}$ must be diffeomorphic to $S^{3}$, contradicting
our assumption that $\Gamma$ is non-trivial. To treat the last possibility, assume without loss of generality again that $\Sigma_i$ meets the strict transform of $\{p \} \times \mathbb{P}^{1}$.
Then $\Sigma_i$ is diffeomorphic to the boundary of a small tubular neighborhood of a fixed point of the $T^2$-action, and so we see once again that $\Sigma_{i}$
 must be diffeomorphic to $S^{3}$, contradicting
again our assumption that $\Gamma$ is non-trivial.
\end{proof}

\begin{Lemma} \label{Lem_cylinder_limit}
Let $(M,\,g,\,X)$ be a two-dimensional complete shrinking gradient K\"ahler-Ricci soliton with $X=\nabla^{g}f$ for a smooth
real-valued function $f:M\to\mathbb{R}$. Suppose that there are sequences $p_i \in M$ and $\lambda_i \to 0$ such that we have smooth Cheeger-Gromov convergence
\[ (M,\lambda_i^{-2} g,p_i) \xrightarrow[i\to\infty]{CG} (M_\infty, g_\infty,p_\infty) \]
with $(M_\infty, g_\infty)$ a shrinking gradient Ricci soliton. Then $(M_\infty, g_\infty)$ is, up to automorphism, either the cylinder $\mathbb{C} \times \mathbb{P}^1$ or Euclidean space $\IC^2$.
\end{Lemma}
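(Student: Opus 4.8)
The plan is to identify $(M_\infty,g_\infty)$ with a complete two\nobreakdash-dimensional shrinking gradient K\"ahler--Ricci soliton and then to classify it, discarding every model but the cylinder and $\IC^2$.

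\emph{Step 1: passing the K\"ahler structure to the limit.} Since $g$ is K\"ahler its complex structure $J$ is parallel, and because the Levi--Civita connection is unchanged under multiplying the metric by a constant, $J$ is also parallel for every $\lambda_i^{-2}g$. Pulling $J$ back by the diffeomorphisms $\phi_i$ realising the Cheeger--Gromov convergence produces $(1,1)$-tensors $J_i:=\phi_i^*J$ of constant norm that are covariantly constant for the smoothly convergent metrics $h_i:=\phi_i^*(\lambda_i^{-2}g)$; hence, after passing to a subsequence, $J_i\to J_\infty$ in $C^\infty_{\loc}$ with $J_\infty$ integrable, parallel, and compatible with $g_\infty$. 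Thus $(M_\infty,g_\infty,J_\infty)$ is K\"ahler, and together with the hypothesis that it is a shrinking gradient Ricci soliton (its potential $f_\infty$ has $J_\infty$-invariant Hessian $g_\infty-\Ric_{g_\infty}$, so $\nabla f_\infty$ is real holomorphic) it is a complete two\nobreakdash-dimensional shrinking gradient K\"ahler--Ricci soliton, and it is non-compact since $\diam(M,\lambda_i^{-2}g)\to\infty$.

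\emph{Step 2: reduction via the classification.} I would next show that $g_\infty$ has bounded scalar curvature, using $R_{g_\infty}\ge 0$, the soliton identity $R_{g_\infty}+|\nabla f_\infty|^2=2f_\infty$, the locally uniform curvature bounds inherited from the convergence, and the description of the ends of a two\nobreakdash-dimensional shrinking K\"ahler--Ricci soliton (cf.~\cite{ccd,munteanu}). By the classification of such solitons (\cite{ccd}, together with \cite{tianzhu1,soliton,FIK,cds}; cf.~Theorem~\ref{mainthm2}) it then follows that, up to automorphism, $(M_\infty,g_\infty)$ is a Fano K\"ahler--Ricci soliton, the Feldman--Ilmanen--Knopf soliton, the Gaussian on $\IC^2$, the cylinder $\IC\times\mathbb{P}^1$, or the soliton of Theorem~\ref{mainthm}; the Fano models are compact and are excluded by Step~1. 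Equivalently, \cite{ccd} gives directly that either $M_\infty\cong\IC^2$ (the flat case) or $M_\infty$ contains a compact holomorphic curve $C$ with $[C]^2\in\{0,-1\}$.

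\emph{Step 3: excluding the remaining models.} Suppose $M_\infty$ contains such a curve $C$, and transport it back through the convergence: for large $i$, $S_i:=\phi_i(C)\subset M$ is an embedded sphere with $[S_i]^2=[C]^2$, symplectic for $\omega_g$ (because $\phi_i^*J\to J_\infty$ and $C$ is $J_\infty$-holomorphic), and with $\int_{S_i}\omega_g=\int_C\phi_i^*\omega_g=\lambda_i^2\bigl(\operatorname{area}_{g_\infty}(C)+o(1)\bigr)\to 0$. If $[C]^2=-1$ this contradicts the fact that, because $[\omega_g]=2\pi c_1(M)$ and by adjunction $\langle c_1(M),[S_i]\rangle=1$, one has $\int_{S_i}\omega_g=2\pi$; this rules out the Feldman--Ilmanen--Knopf soliton and the soliton of Theorem~\ref{mainthm}. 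If $[C]^2=0$, then $C$ deforms in a one\nobreakdash-parameter family ($h^0(N_{C/M_\infty})=1$, $h^1=0$), fibering $M_\infty$ over a noncompact Riemann surface $B$ with $\mathbb{P}^1$ fibers; since every holomorphic bundle over an open Riemann surface is trivial, $M_\infty\cong B\times\mathbb{P}^1$, and the classification of one\nobreakdash-dimensional shrinking solitons forces $B=\IC$ and the product soliton, i.e.\ $M_\infty=\IC\times\mathbb{P}^1$. Hence $(M_\infty,g_\infty)$ is $\IC\times\mathbb{P}^1$ or $\IC^2$.

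I expect the main difficulties to be the global scalar-curvature bound for $g_\infty$ in Step~2 (needed so that the classification of \cite{ccd} applies) and the rigidity in Step~3, in particular checking that the transported curve $S_i$ is genuinely symplectic and pinning down the intersection-theoretic input in the (a priori noncompact) surface $M$; passing the K\"ahler structure to the limit and discarding the compact Fano models are routine.
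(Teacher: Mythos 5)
Your Step 2 is the problem, and it is fatal to the architecture of the proof. To invoke the classification of non-compact two-dimensional shrinkers from \cite{ccd} (or, worse, Theorem~\ref{mainthm2}, which is downstream of this very lemma and of Theorem~\ref{mainthm}, so citing it here is circular), you need $(M_\infty,g_\infty)$ to be complete with \emph{bounded scalar curvature}. Nothing in the hypotheses gives you this: the convergence is only locally smooth, neither $(M,g)$ nor the limit is assumed to have any global curvature bound, and your proposed derivation from $R\ge 0$ and the identity $R+|\nabla f_\infty|^2=2f_\infty$ cannot work, since $f_\infty$ grows quadratically and the identity places no upper bound on $R$. This is not a repairable technicality: in the paper the lemma is applied inside the proof of the claim that the singularity model $(M',g')$ has bounded curvature --- i.e., it is needed precisely \emph{before} any global curvature control is available --- so any proof routed through the bounded-scalar-curvature classification is unusable there. (Your Step 3, transporting curves back and playing adjunction plus $[\omega]=2\pi c_1$ against Wirtinger and the vanishing area, is sound and is essentially the mechanism of Lemma~\ref{bonita}; but it sits on a foundation that is not there, and note that applied to a $0$-curve it would also ``exclude'' the cylinder, which should have made you suspicious of where the real content lies.)

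The idea you are missing is that the lemma has a short, self-contained proof using the soliton potential of the \emph{original} manifold $M$, not of the limit. Rescaling $\Ric_g+\nabla^2 f=g$ by $\lambda_i^{-2}$ turns the right-hand side into $\lambda_i^2\,(\lambda_i^{-2}g)\to 0$, so all Hessian derivatives of $f$ are controlled and one splits into two cases according to whether $a_i:=|\nabla^{\lambda_i^{-2}g}f|(p_i)$ stays bounded. If it does, $f-f(p_i)$ converges to a function $f'_\infty$ with $\Ric_{g_\infty}+\nabla^2 f'_\infty=0$; subtracting the shrinker equation for $f_\infty$ gives $\nabla^2(f'_\infty-f_\infty)=g_\infty$, whose gradient flow consists of homotheties, forcing $(M_\infty,g_\infty)$ to be flat $\IC^2$. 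If $a_i\to\infty$, then $a_i^{-1}(f-f(p_i))$ converges to a nonconstant function with vanishing Hessian, so the limit splits off a line, hence (being K\"ahler) a $\IC$-factor, and the remaining complex one-dimensional shrinker is $\mathbb{P}^1$ or $\IC$. No classification theorem, no curvature bound, and no curve-transport argument is needed.
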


\begin{proof}
Let $f_\infty$ be a soliton potential for $(M_\infty,\,g_{\infty})$.
Rescaling the soliton equation of $(M,g,f)$ yields for all $k \geq 0$,
\begin{equation} \label{eq_Hess_f_bound}
 |\nabla^{k+2,\lambda_i^{-2} g} f |_{\lambda_i^{-2} g} \leq |\nabla^{k,\lambda_i^{-2} g} {\Ric}_{\lambda_i^{-2} g} |_{\lambda_i^{-2} g} + C_k \lambda_i^2.
\end{equation}
So if $|\nabla^{\lambda_i^{-2} g} f| (p_i)$ remains bounded, then after passing to a subsequence, we have local smooth convergence
\[ f - f(p_i) \xrightarrow[i\to\infty]{C^\infty_{\loc}} f'_\infty \]
for some $f'_\infty \in C^\infty(M_\infty)$ with
\[ \Ric_{g_\infty} + \nabla^2 f'_\infty = 0. \]
This implies that
\[ \nabla^2 (f'_\infty - f_\infty) =  g_\infty. \]
Therefore $f'_\infty - f_\infty$ is proper and attains a unique minimum at some point $x' \in M_\infty$ and we have that $f'_\infty - f_\infty = \tfrac12 d^2(x',\,\cdot)$.
Moreover, the flow of the gradient vector field $\nabla(f'_\infty - f_\infty)$ consists of homotheties.
This implies that $(M_\infty, g_\infty)$ is isometric to $\IC^2$.

Next, suppose that $a_i := |\nabla^{\lambda_i^{-2} g} f| (p_i) \to \infty$.
Then by \eqref{eq_Hess_f_bound}, we find that $a_i^{-1} (f-f(p_i))$ converges locally smoothly to a non-trivial function on $M_\infty$ with vanishing Hessian.
This implies that the limit splits off a line.
Due to the K\"ahler condition, the limit must split off another line, hence must be isometric to $\IC \times \mathbb{P}^1$ or $\IC^2$.
\end{proof}

\begin{Lemma}\label{bonita}
Let $(M', J', g')$ be a (not necessarily complete) K\"ahler surface.
Suppose that $C'_0 \subset M'$ is a $J'$-holomorphic curve in $M'$ biholomorphic to $\mathbb{P}^1$ with self-intersection $-1$ or $0$.
Then there exists $\eps (M', J', g', C'_0) > 0$ such that the following holds true.
Suppose that there is a diffeomorphism onto its image $\psi : M' \to N$ such that for some $\lambda > 0$ and $t \in [0,T)$ we have
\begin{equation} \label{eq_ggp_JJp}
 \Vert \lambda^{-2} \psi^* g(t) - g'  \Vert_{C^{[1/\eps]}} \leq \eps, \qquad \Vert  \psi^* J - J'  \Vert_{C^{[1/\eps]}} \leq \eps,
\end{equation}
where the second bound can be dropped if $(M',g')$ is isometric to an open subset of $\IC \times \mathbb{P}^1$.
Then
\begin{equation} \label{eq_lambda_bound}
\lambda\geq\left(\frac{\pi}{V}\right)^{\frac{1}{2}}(T-t)^{\frac12},
\end{equation}
 where $V=\operatorname{vol}(C_{0}',\,g')$.
\end{Lemma}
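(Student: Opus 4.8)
The plan is to transplant $C_0'$ into $N$ via $\psi$ and play two estimates for the area of the resulting surface $S := \psi(C_0')$ (with respect to $g(t)$) against each other. From the near-isometry, $\operatorname{area}(S,g(t))$ is essentially $\lambda^2 V$; from Wirtinger's inequality for the K\"ahler form $\omega(t)$, it is at least $\langle[\omega(t)],[S]\rangle$, and identifying the possible homology classes of $S$ shows that this pairing is, up to the fixed normalization, at least $\operatorname{vol}(E,\omega(t))\sim T-t$. Concretely: since $\psi$ is a diffeomorphism onto its open image, $S$ is a closed embedded $2$-sphere and $\psi$ carries the normal bundle of $C_0'$ in $M'$ onto that of $S$ in $N$; hence $[S]\cdot[S] = [C_0']^2 \in \{-1,0\}$ in $H_2(N;\IZ)$. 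Writing $[S] = m[D_1] + n[D_2] + k[E]$ in the basis of $H_2(N;\IZ)$ given by $[D_1],[D_2],[E]$ (with $[D_i]^2 = 0$, $[D_1]\cdot[D_2] = 1$, $[D_i]\cdot[E] = 0$, $[E]^2 = -1$), the constraint becomes $2mn - k^2 \in \{-1,0\}$, while by \eqref{kahlerclass} and the volume computations preceding Lemma~\ref{volume},
\[ \langle[\omega(t)],[S]\rangle = \int_S\omega(t) = m(a_2 - 2t) + n(a_1 - 2t) + k(b-t), \]
all three coefficients $a_2 - 2t,\ a_1 - 2t,\ b-t$ being positive because $0 \le t < T = b = \tfrac{a_2}{2} < \tfrac{a_1}{2}$.

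Next I would invoke the hypotheses \eqref{eq_ggp_JJp}. Because $\psi^* J$ is $\eps$-close to $J'$ and $C_0'$ is $J'$-holomorphic, the tangent planes of $S$ make angle $O(\eps)$ with $J$-complex lines, so $\omega(t)|_S$ is pointwise at least $(1 - O(\eps))$ times the area form of $g(t)$ on $S$; in particular $\langle[\omega(t)],[S]\rangle = \int_S\omega(t) > 0$. (If $(M',g')$ is an open subset of $\IC\times\mathbb{P}^1$, its metric has reducible holonomy, hence admits only finitely many parallel compatible complex structures, so $\psi^*J$ is automatically $C^0$-close to one of them, which up to orientation we take to be $J'$; this is why the second bound in \eqref{eq_ggp_JJp} may be dropped in that case.) Also, $\lambda^{-2}\psi^*g(t)$ is $\eps$-close to $g'$ in $C^0$, so $\operatorname{area}(S,g(t)) = \operatorname{area}(C_0',\psi^*g(t)) \le (1 + O(\eps))\lambda^2 V$. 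Combining with the elementary Wirtinger inequality $\operatorname{area}(S,g(t)) \ge \int_S\omega(t)$, it remains to bound $\langle[\omega(t)],[S]\rangle$ from below.

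For this I would run through the integer triples $(m,n,k)$ with $2mn - k^2 \in \{-1,0\}$ for which the pairing is positive. Triples with $m \le -1$ or $n \le -1$ are excluded either by the self-intersection constraint (it forces $k^2 < 0$) or by positivity of $\langle[\omega(t)],[S]\rangle$, so $m,n \ge 0$; substituting $a_2 = 2b = 2T$ one finds
\[ \langle[\omega(t)],[S]\rangle = (2m + 2n + k)(T-t) + n\,(a_1 - a_2). \]
If $n = 0$, then $2m + k$ is a positive integer, so $\langle[\omega(t)],[S]\rangle \ge T-t$ (with equality at $[S] = [E]$ or $[S] = [D_1] - [E]$); if $n \ge 1$, the constraint $2mn - k^2 \in \{-1,0\}$ forces $2m + 2n + k \ge 1$, so $\langle[\omega(t)],[S]\rangle \ge (T-t) + (a_1 - a_2) > T-t$. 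Hence in all cases $\langle[\omega(t)],[S]\rangle \ge b - t = \operatorname{vol}(E,\omega(t))$, i.e. $\int_S\omega(t) \ge \int_E\omega(t)$. Feeding this into the area comparison and absorbing the $O(\eps)$-loss — the precise constant $\pi$ in \eqref{eq_lambda_bound} being dictated by the normalization of $\omega(t)$ fixed before Lemma~\ref{volume} — gives $(1 + O(\eps))\lambda^2 V \ge \pi(T-t)$, which for $\eps$ small is \eqref{eq_lambda_bound}.

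The main obstacle is the homology-class bookkeeping in the last step: one must rule out a class of self-intersection $-1$ or $0$ that pairs positively with $[\omega(t)]$ but has smaller $[\omega(t)]$-area than $E$. This is delicate precisely because classes carrying a $[D_2]$-component have area bounded below only by $a_1 - a_2$, which the construction allows to be arbitrarily small; the decomposition $\langle[\omega(t)],[S]\rangle = (2m+2n+k)(T-t) + n(a_1 - a_2)$, together with the facts that $2m + k \ge 1$ when $n = 0$ and $2m + 2n + k \ge 1$ when $n \ge 1$, is what makes the lower bound uniform in $t$. The remaining ingredients — the angle estimate giving near-holomorphicity of $S$, the $C^0$ area comparison, and the holonomy remark for $\IC\times\mathbb{P}^1$ — are routine and I would not belabor them.
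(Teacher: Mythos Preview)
Your argument is correct and yields the needed bound $\lambda \ge c\,(T-t)^{1/2}$, but it takes a genuinely different route from the paper's proof. The paper does not push forward $C'_0$ directly; instead it first perturbs $C'_0$ to a \emph{genuine} $\psi^*J$-holomorphic curve $C'_\eps$ (invoking the deformation results \cite[Corollaries~2.3, 2.4]{ccd} and elliptic regularity), so that $\psi(C'_\eps)\subset N$ is honestly $J$-holomorphic. Adjunction then gives $c_1(N)\cdot[\psi(C'_\eps)]=\alpha+2$, and since $[\omega(s)]$ stays in the closure of the K\"ahler cone as $s\to T^-$ one gets $\lim_{s\to T^-}\vol(\psi(C'_\eps),g(s))\ge 0$; hence $\vol(\psi(C'_\eps),g(t))\ge 2\pi(\alpha+2)(T-t)\ge 2\pi(T-t)$ in one line, with no case analysis on $H_2(N;\IZ)$. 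Your approach trades this for a more elementary toolkit: you avoid the pseudo-holomorphic perturbation theory entirely, relying only on the normal-bundle identification $[S]^2=[C'_0]^2$, Wirtinger's inequality, and the near-holomorphicity estimate to force $\int_S\omega(t)>0$, and then run through the integer solutions of $2mn-k^2\in\{-1,0\}$. Both the handling of the $\IC\times\mathbb{P}^1$ case (parallel complex structures) and the conclusion agree. One caveat: with the normalization fixed before Lemma~\ref{volume} your lower bound is literally $\langle[\omega(t)],[S]\rangle\ge T-t$, so after absorbing the $O(\eps)$ losses you obtain $\lambda^2 V\ge c'(T-t)$ for some $c'<1$ rather than the stated constant $\pi$; this is harmless, since the applications (Claims~\ref{Cl_no_S2R2}, \ref{Cl_no_P1_m1} and the final lemma) use only the scale $(T-t)^{1/2}$, not the precise constant.
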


Note that we will mainly be interested in Lemma \ref{bonita} in the case where $(M',\,g')$ is an open subset of a complex curve in $\mathbb{C}\times\mathbb{P}^{1}$ or of the shrinking gradient K\"ahler-Ricci soliton of Feldman-Ilmanen-Knopf \cite{FIK}. In these cases, the assumption of the lemma is inferred by pointed Cheeger-Gromov closeness of these model spaces modulo rescaling by $\lambda^{-1}$.

\begin{proof}
If $(M', g')$ is isometric to an open subset of $\IC \times \mathbb{P}^1$, then $J_\eps := \psi^*J$ converges locally smoothly to a $g'$-parallel complex structure $J'_0$ on $M'$ as $\eps \to 0$.
By \cite[Claim 3.3]{ccd}, $J'_0$ coincides with $J'$ up to a sign on each factor, so in this case we may assume without loss of generality that $J'_0 = J'$.
Thus, after possibly adjusting $\eps$, we may assume that both bounds in \eqref{eq_ggp_JJp} hold.

Let $\alpha\in\{-1,\,0\}$ denote the self-intersection of the $J'$-holomorphic curve $C'_0$ in $M'$.
Then by \cite[Corollary 2.3]{ccd} when $\alpha=0$, and
\cite[Corollary 2.4]{ccd} when $\alpha=-1$, and regularity theory \cite[Proposition 3.3.5 and Section B.4]{dusa2}, there exists a $J_{\eps}$-holomorphic curve $C'_{\eps}$ in $M'$ with
self-intersection $\alpha$ converging smoothly to $C'_{0}$ as $\eps\to 0$. Since $\lambda^{-2}\psi^{*}g(t)\to g'$ and $C'_{\eps}\to C'_{0}$ in $C^{1}$ as $\eps\to0$, we can assert that
\begin{equation*}
\left|\operatorname{vol}(C'_{\eps},\,\lambda^{-2}\psi^{*}g(t))-V\right|\to0\qquad\textrm{as $\eps\to0$.}
\end{equation*}
Fix $\eps>0$ small enough such that $C'_\eps$ exists and such that
\begin{equation} \label{eq_vol_Ceps}
 \vol ( C'_\eps, \lambda^{-2} \psi^* g(t) ) \leq 2V.
\end{equation}

Next, using \eqref{hermosa}, the adjunction formula,
and the fact that $\psi(C'_{\eps})$ is $J$-holomorphic with self-intersection $\alpha$ in $N$, we compute that
\begin{equation*}
\begin{split}
\operatorname{vol}(C'_{\eps},\,\lambda^{-2}\psi^{*}g(t))&=\lambda^{-2}\operatorname{vol}(C'_{\eps},\,\psi^{*}g(t))=\lambda^{-2}\operatorname{vol}(\psi(C'_{\eps}),\,g(t))\\
&=\lambda^{-2}\int_{\psi(C'_{\eps})}\left[\left.\omega(t)\right|_{\psi(C'_{\eps})}\right]\\
&=\lambda^{-2}\int_{\psi(C'_{\eps})}\left(\left[\left.\omega(0)\right|_{\psi_{\varepsilon}(C'_{\eps})}\right]-2\pi tc_{1}\left(-K_{N}|_{\psi(C'_{\eps})}\right)\right)\\
&=\lambda^{-2}\int_{\psi(C'_{\eps})}\left(\left(\left[\left.\omega(0)\right|_{\psi_{\varepsilon}(C'_{\eps})}\right]-2\pi Tc_{1}\left(-K_{N}|_{\psi(C'_{\eps})}\right)\right)+2\pi(T-t)c_{1}\left(-K_{N}|_{\psi(C'_{\eps})}\right)\right)\\
&=\lambda^{-2}\left(2(\alpha+2)\pi(T-t)+\lim_{s\to T^{-}}\operatorname{vol}(\psi(C'_{\eps}),\,g(s))\right)\\
&\geq 2\lambda^{-2}\pi(T-t).
\end{split}
\end{equation*}
Combined with the bound \eqref{eq_vol_Ceps}, we derive \eqref{eq_lambda_bound}.
\end{proof}

\begin{Lemma}
The flow $(g(t))_{t \in [0,T)}$ is Type I, i.e., there is a constant $C < \infty$ such that
\[ |{\Rm}| \leq \frac{C}{T-t}. \]
\end{Lemma}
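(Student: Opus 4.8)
\emph{Proof sketch / plan.}
The plan is to argue by contradiction, combining Bamler's structure theory for Ricci flow with the area estimate of Lemma~\ref{bonita}. Suppose the flow is not Type~I, so that $\limsup_{t\to T}(T-t)\sup_N|{\Rm}(\cdot,t)|=\infty$, and choose, by a point-selection argument, points $(x_i,t_i)$ with $t_i\to T$, $Q_i:=|{\Rm}|(x_i,t_i)\to\infty$ and $(T-t_i)Q_i\to\infty$. The flow is $\kappa$-noncollapsed below the scale $\sqrt{T}$ and has uniformly bounded Nash entropy (Perelman), and the parabolic rescalings $g_i(t):=Q_i\,g(t_i+Q_i^{-1}t)$ --- K\"ahler--Ricci flows on $N$ with the fixed complex structure $J$ --- inherit these bounds. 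By Bamler's compactness theorem \cite{Bamler_RF_compactness}, after passing to a subsequence $(N,g_i(t),x_i)$ converges within a correspondence to a pointed ancient metric flow $(\mathcal{X}_\infty,x_\infty)$ on $(-\infty,0]$ that is $\kappa$-noncollapsed, has bounded Nash entropy, and satisfies $|{\Rm}|(x_\infty,0)=1$, so is in particular nonflat. By Bamler's partial regularity \cite{Bamler_HK_RF_partial_regularity}, the singular set of $\mathcal{X}_\infty$ has parabolic codimension at least $4$; in particular, near any of its points a rescaling of the flow converges to a static flat metric cone $\IR^4/\Gamma$ with $\Gamma\subset O(4)$ non-trivial and acting freely on $S^3$, the convergence being smooth on annular regions, and transporting this convergence to $N$ through the correspondence contradicts Lemma~\ref{Lem_no_cone}. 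Hence $\mathcal{X}_\infty$ is smooth: a complete, nonflat, $\kappa$-noncollapsed ancient K\"ahler--Ricci flow $(M_\infty,J_\infty,g_\infty(t))_{t\in(-\infty,0]}$ of bounded curvature. (In the non-Type-I situation one may alternatively obtain $(M_\infty,g_\infty)$ directly as a smooth Hamilton--Cheeger--Gromov limit, after refining the point selection to give $|{\Rm}|\le 2Q_i$ on a backward parabolic neighbourhood of $(x_i,t_i)$ of radius $A_iQ_i^{-1/2}$ with $A_i\to\infty$.)

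I would next locate a shrinking soliton at a scale comparable to the curvature scale. By Bamler's structure theory for $\kappa$-noncollapsed ancient flows of bounded Nash entropy \cite{Bamler_HK_entropy_estimates,Bamler_HK_RF_partial_regularity}, $(M_\infty,g_\infty)$ is close --- at most scales --- to a shrinking gradient Ricci soliton, possibly with orbifold singularities: there is $Z<\infty$, depending only on the entropy bound, such that for every $\eps>0$ the set of scales $s>0$ at which $(M_\infty,g_\infty)$, centred at $x_\infty$ and rescaled by $s^{-1}$, fails to be $\eps$-close (together with $J_\infty$) to such a soliton has $\tfrac{ds}{s}$-measure at most $Z$. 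As $[1,e^{2Z}]$ has $\tfrac{ds}{s}$-measure $2Z$, there is a good scale $s_0\in[1,e^{2Z}]$ at which $(M_\infty,g_\infty(0),J_\infty)$ near $x_\infty$ is $\eps$-close to a shrinking gradient K\"ahler--Ricci soliton $(M^*,J^*,g^*)$. The curvature of $(M_\infty,g_\infty)$ at $(x_\infty,0)$ rescaled to scale $s_0$ equals $s_0^2\ge 1$, so $(M^*,g^*)$ is nonflat; by Lemma~\ref{Lem_no_cone} it has no orbifold points, as these would carry flat-cone tangent flows realised as smooth annular limits and hence, by the argument above, transported to $N$; and if $(M^*,g^*)$ is compact it is a del Pezzo surface diffeomorphic to $N=\operatorname{Bl}_x(\mathbb{P}^1\times\mathbb{P}^1)$, hence not biholomorphic to $\mathbb{P}^2$. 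Since every del Pezzo surface other than $\mathbb{P}^2$ contains a rational curve of self-intersection $0$ or $-1$, and since by \cite{ccd} so does every nonflat noncompact two-dimensional shrinking gradient K\"ahler-Ricci soliton, in all cases $(M^*,g^*)$ contains a $J^*$-holomorphic curve $C^*\cong\mathbb{P}^1$ of self-intersection $0$ or $-1$. Set $V^*:=\vol(C^*,g^*)<\infty$.

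Finally I would feed this into Lemma~\ref{bonita}. Composing $g_i\to g_\infty$ near $x_i$ with the $\eps$-closeness of $(M_\infty,g_\infty)$ to $(M^*,g^*)$ at scale $s_0$, a diagonal argument yields, for all large $i$, a diffeomorphism onto its image $\psi_i$ defined on an exhaustion of $M^*$ and a scale $\lambda_i\le 2e^{2Z}Q_i^{-1/2}$ such that, at time $t_i$, the pair $(\lambda_i^{-2}\psi_i^*g(t_i),\psi_i^*J)$ is $\eps$-close to $(g^*,J^*)$ in $C^{[1/\eps]}$ on a region containing $C^*$. Lemma~\ref{bonita}, applied to the model $(M^*,J^*,g^*)$ and the curve $C^*$, then gives $\lambda_i\ge(\pi/V^*)^{1/2}(T-t_i)^{1/2}$; combined with $\lambda_i\le 2e^{2Z}Q_i^{-1/2}$ this forces $(T-t_i)Q_i\le 4e^{4Z}V^*/\pi$, contradicting $(T-t_i)Q_i\to\infty$. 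Therefore the flow is Type~I.

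The step I expect to be the main obstacle is the second one: converting Bamler's ``soliton at most scales'' statement into a shrinking soliton model at a scale within a fixed multiple of the curvature scale, and verifying that this model is smooth (via Lemma~\ref{Lem_no_cone}) and carries a rational curve of self-intersection $0$ or $-1$ of finite area (via \cite{ccd}). The point selection, the compactness step, and the diagonal bookkeeping feeding Lemma~\ref{bonita} should be routine once this is in hand.
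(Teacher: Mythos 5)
Your overall strategy is the same as the paper's: argue by contradiction, use Bamler's compactness and partial regularity to extract a shrinking gradient K\"ahler--Ricci soliton (orbifold) model at a scale much smaller than $(T-t)^{1/2}$, kill the orbifold points with Lemma~\ref{Lem_no_cone}, invoke \cite{ccd} to produce a rational curve of self-intersection $0$ or $-1$ in the model, and then contradict the area lower bound of Lemma~\ref{bonita}. The organization differs: the paper performs the Type~I rescaling $(T-t_i)^{-1}g((T-t_i)t+t_i)$, passes to an $\IF$-limit $\XX$, and takes an honest \emph{tangent flow} at the singular point $x_\infty$ (so the scales $\lambda_j\to 0$ are what feed Lemma~\ref{bonita}), whereas you rescale by curvature and try to locate a soliton at a single ``good'' scale comparable to $Q_i^{-1/2}$ via the almost-selfsimilarity-at-most-scales theorem. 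Your route is plausible, but note two technical points: in Bamler's theorem the constant $Z$ depends on $\eps$ (harmless for your final inequality, since $\eps$ is fixed, but the quantifiers as written are wrong), and the closeness at a single $\eps$-selfsimilar scale is a priori only in the weak ($\IF$/$W_1$-integral) sense, so upgrading to the $C^{[1/\eps]}$-closeness of the metric \emph{and} the complex structure on a region containing $C^*$ --- which is what Lemma~\ref{bonita} actually requires --- needs the $\eps$-regularity machinery and a limiting argument. The tangent-flow route sidesteps this because smooth convergence on the regular part is part of the structure theory for tangent flows; you correctly flag this step as the main obstacle.

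The concrete gap is that you never establish that the soliton model $(M^*,J^*,g^*)$ has bounded (scalar) curvature. This hypothesis is needed to apply \cite[Theorem A]{ccd} in the noncompact case, which is the only source of the curve $C^*$ of self-intersection $0$ or $-1$; without a curvature bound the soliton is simply unclassified and might carry no such curve, and your final application of Lemma~\ref{bonita} has nothing to act on. The model arises as a limit only on a bounded region around the basepoint, so no curvature bound is inherited for free from the point-picking (that controls the ancient flow $M_\infty$, not the soliton $M^*$ outside the region of closeness). The paper devotes a separate claim to exactly this: if $|{\Rm}|(y_k)\to\infty$ along $y_k\to\infty$ in the model, one performs a second blow-up at almost-selfsimilar scales, obtains another metric soliton which (after again excluding orbifold points via Lemma~\ref{Lem_no_cone}) must by Lemma~\ref{Lem_cylinder_limit} be $\IC\times\mathbb{P}^1$ or flat $\IC^2$, and the cylinder is excluded by the same area argument (the paper's Claim~\ref{Cl_no_S2R2}), while flatness is excluded by the negative pointed Nash entropy. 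You would need to incorporate this step --- in particular, you make no use of Lemma~\ref{Lem_cylinder_limit}, which is precisely the tool the paper introduces for this purpose.
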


\begin{proof}
Suppose not, i.e., there exists a sequence $(x_i, t_i) \in N \times [0,T)$ with
\begin{equation} \label{eq_Type_II}
(T-t_i) |{\Rm}|(x_i,t_i) \to \infty.
\end{equation}
Using the language\footnote{Note that in this paper we are using the Ricci flow equation $\partial_t g(t) = - \Ric_{g(t)}$ which is more common in the K\"ahler setting.
By a simple reparameterization of the time parameter, any such flow can be converted to a Ricci flow satisfying $\partial_t g(t) = - 2\Ric_{g(t)}$, which is the subject of \cite{Bamler_RF_compactness,Bamler_HK_RF_partial_regularity}.}
 of \cite{Bamler_RF_compactness,Bamler_HK_RF_partial_regularity}, consider the sequence of parabolic Type I rescalings based at $(x_i,t_i)$:
\begin{equation} \label{eq_rescl_N}
 \big(N, ((T-t_i)^{-1} g((T-t_i) t + t_i))_{-\frac{t_i}{T-t_i} \leq t \leq 0}, (\nu_{x_i,0;t}) \big).
\end{equation}
Here $(\nu_{x_i,0;t})$ denotes the conjugate heat kernel measure based at $(x_i,0)$ of the rescaled flow; this corresponds to the conjugate kernel measure based at $(x_i,t_i)$ of the original flow.
By \cite{Bamler_RF_compactness,Bamler_HK_RF_partial_regularity} and after passing to a subsequence, these metric flow pairs $\IF$-converge to an ancient metric flow pair
\[ (\XX, (\nu_{x_\infty;t})_{t \leq 0} ) \]
which is smooth away from a codimension 4 singular set.
Due to \eqref{eq_Type_II}, this flow is singular at $x_\infty$, meaning that the pointed Nash-entropy satisfies $\lim_{\tau \to 0} \mathcal{N}_{x_\infty}(\tau) < 0$ (see also \cite[Theorem~10.2]{Bamler_HK_entropy_estimates}).
So the metric flow $\XX$ has a non-trivial (i.e., non-Euclidean) tangent flow at $x_\infty$ \cite[Theorem~2.40]{Bamler_HK_RF_partial_regularity}:
\[ (\XX', (\nu_{x'_\infty;t})_{t \leq 0} ), \]
i.e., there is a sequence $\lambda_j \to 0$ such that
\begin{equation} \label{eq_j_convergence}
 \lambda_j^{-1}(\XX, (\nu_{x_\infty;t})_{t \leq 0} ) \xrightarrow[j\to\infty]{\IF} (\XX', (\nu_{x'_\infty;t})_{t \leq 0} ).
\end{equation}
By \cite[Theorems~2.40, 2.18, 2.46]{Bamler_HK_RF_partial_regularity}, this limit is a metric soliton and all of its time-slices are homothetic to a non-flat, smooth shrinking gradient Ricci soliton orbifold $(M', g', f')$ with isolated singularities.
The smooth convergence in \eqref{eq_j_convergence} on the regular part of $\XX'$ implies that there is an exhaustion $U_1 \subset U_2 \subset \ldots \subset M'$ of the regular part of $M'$ and a sequence of diffeomorphisms onto their images $\psi_j : U_j \to \RR_{-\lambda^2_j}$ into the regular part of $\XX$ such that
\begin{equation} \label{eq_pullback_conv}
 \lambda_j^{-2} \psi_j^* g^{\RR}_{-\lambda^2_j} \xrightarrow[j\to\infty]{\;\; C^\infty_{\loc} \;\;} g'.
\end{equation}
Since we have smooth convergence of the flows \eqref{eq_rescl_N} on the regular part $\RR$ of $\XX$, this implies that for any $j$ there is an $\underline{i}(j) < \infty$ such that for all $i \geq \underline{i}(j)$ there is a diffeomorphism onto its image $\phi_{j,i} : U_j \to N$ with
\begin{equation} \label{eq_conv_elementary}
 \lambda_j^{-2} (T-t_i)^{-1} \phi_{j,i}^* g(t_i - \lambda_j^2 (T-t_i)) \xrightarrow[j\to\infty, \; i \geq \underline{i}(j)]{C^\infty_{\loc}} g'.
\end{equation}
This means that the $C^\infty_{\loc}$-convergence from \eqref{eq_conv_elementary} holds for any sequence $i_j \geq \underline{i}(j)$.
It also implies that $g'$ is K\"ahler on the regular part of $M'$.

If $M'$ had an orbifold singularity, then a blowup near each singularity would be of the form $\IR^4/\Gamma$ for $\Gamma$ as in Lemma~\ref{Lem_no_cone}.
Combined with \eqref{eq_conv_elementary}, this would produce a sequence as in Lemma~\ref{Lem_no_cone} and therefore a contradiction.
Consequently $M'$ is smooth and \eqref{eq_conv_elementary} implies that we can recover $(M',g')$ as a smooth, pointed Cheeger-Gromov limit of the rescaled metrics $\lambda_j^{-2} (T-t_i)^{-2} g(t_i - \lambda_j^2 (T-t_i))$.
Moreover, since the scalar curvature of $g(t)$ is uniformly bounded from below, we have $R \geq 0$ on $(M',g')$.
If we had $R = 0$ at some point of $M'$, then the strong maximum principle applied to the evolution equation for $R$ would
imply that $\Ric = 0$ and $\nabla^2 f = g$, which would lead to a contradiction as in the proof of Lemma~\ref{Lem_cylinder_limit}.
As a result, $R > 0$ on $M'$.

\begin{Claim}
$M'$ is non-compact.
\end{Claim}

\begin{proof}
Suppose that $M'$ was compact.
Then we would have $U_j = M'$ for large $j$ and the convergence \eqref{eq_conv_elementary} would imply that there was a sequence of times $t'_j \to T$ such that $R_{\min} (t) := \min_N R(\cdot, t)$ satisfies
\[ \lim_{j \to \infty} (T-t'_j) R_{\min}(t'_j) = \infty, \]
because $\min_{M'} R > 0$.
However, the evolution equation
\[ \partial_t R = \triangle R + |{\Ric}|^2 \geq \triangle R + \tfrac14 R^2 \]
implies that $\partial_t R_{\min} (t) \geq \frac14 R_{\min}^2 (t)$, from which it follows that $R_{\min}(t) \leq 4/(T-t)$.
\end{proof}

\begin{Claim} \label{Cl_no_S2R2}
There are no sequences $y_k \in M'$ and $\lambda'_k > 0$ with $\limsup_{k\to\infty} \lambda'_k < \infty$ such that $(M'_\infty, \lb \lambda^{\prime -2}_k g', \lb y_k)$ converges to the round cylinder $\IC \times \mathbb{P}^1$ in the smooth, pointed Cheeger-Gromov sense.
In particular, $(M', g')$ is not isometric to $\IC \times \mathbb{P}^1$.
\end{Claim}

\begin{proof}
Suppose for sake of a contradiction that there were such sequences. Then we can use \eqref{eq_conv_elementary} for some sequences $j_k$, $i_k \geq \underline{i}(j_k)$ to argue that we have smooth, pointed  Cheeger-Gromov convergence of
\[ (N, \lambda_{j_k}^{-2} (T-t_{i_k})^{-1} \lambda^{\prime -2}_k g(t_{i_k} - \lambda_{j_k}^2 (T-t_{i_k})) , \phi_{j_k,i_k} (y_k)) \]
to $\IC \times \mathbb{P}^1$.
Here we need to choose $j_k$ sufficiently large based on the location of $y_k$ and then $i_k \geq \underline{i}(j_k)$.
We may also ensure that $j_k \to \infty$.
By Lemma~\ref{bonita}, this implies that for large $k$ we have
\[ \lambda_{j_k} (T-t_{i_k})^{1/2} \lambda'_k > \big(T - (t_{i_k} - \lambda_{j_k}^2 (T - t_{i_k}) \big)^{1/2} \geq (T-t_{i_k})^{1/2}. \]
This, however, contradicts the fact that $\lambda_{j_k} \to 0$ and $\limsup_{k\to\infty} \lambda'_k < \infty$.
\end{proof}

\begin{Claim}
$(M',g')$ has bounded curvature.
\end{Claim}

\begin{proof}
Assume for sake of a contradiction that $|{\Rm}|(y_k) \to \infty$ along some sequence $y_k \in M' = \RR'_{-1}$.
Thus, for any $\tau > 0$ we have $\limsup_{k\to\infty}\NN_{y_k}(\tau) < 0$ \cite[Theorem~10.2]{Bamler_HK_entropy_estimates} and
so by monotonicity of the pointed Nash-entropy, we can find sequences $A_k \to \infty$, $\tau_k \to 0$ such that \[ |\NN_{y_k}(A_k\tau_k) - \NN_{y_k} (A_k^{-1} \tau_k)| \to 0, \qquad \limsup_{k\to\infty} \NN_{y_k}(\tau_k) < 0. \]
Consider now the sequence of rescaled metric flow pairs
\[ \tau_k^{-1} (\XX', (\nu_{y_k;t})_{t \leq 0} ). \]
After passing to a subsequence, this sequence converges to another metric flow pair $(\XX'', (\nu_{y_\infty;t})_{t \leq 0} )$, which must be a metric soliton. Let $(M'', g'', f'')$
be the associated orbifold.
The metric on the regular part of $(M'', g'')$ arises as a smooth local limit of rescalings of pullbacks of the metrics $g'_{-1-\tau_k}$, akin to \eqref{eq_pullback_conv}.
As these metrics are homothetic to the metric $g'$ on $M'$, this implies that the metric on the regular part of $(M'', g'')$ is a pointed blowup limit of $(M', g')$ along a certain sequence of points converging to infinity.
Combining this with \eqref{eq_conv_elementary}, we may again rule out orbifold singularities as before. Hence $M''$ is smooth.
By Lemma~\ref{Lem_cylinder_limit} this implies that $(M'', g'')$ must be isometric to $\IC \times \mathbb{P}^1$, contradicting Claim~\ref{Cl_no_S2R2}.
\end{proof}

\begin{Claim} \label{Cl_no_P1_m1}
$M'$ does not contain a $J'$-holomorphic $\mathbb{P}^1$ with self-intersection $-1$, where $J'$ is a sublimit of the pullbacks $\phi_{j,i_j}^* J$ for some sequence $i_j \geq \underline{i}(j)$.
\end{Claim}

\begin{proof}
We argue as in the proof of Claim~\ref{Cl_no_S2R2}.
If the claim were false, then by Lemma~\ref{bonita} there would be a uniform constant $c > 0$ such that for large $j$ and $i \geq \underline{i}(j)$ we have
\[ \lambda_j (T-t_i)^{1/2} \geq c (T - t_i)^{1/2}. \]
This contradicts the fact that $\lambda_j \to 0$.
\end{proof}

So $(M',g',f')$ is a non-compact, complete, shrinking gradient K\"ahler-Ricci soliton with bounded curvature
that is not biholomorphic to $\mathbb{C}\times\mathbb{P}^1 $ and does not contain a $J'$-holomorphic $\mathbb{P}^1$ with self-intersection $-1$.
This contradicts \cite[Theorem A(i)]{ccd} and completes the proof of the lemma.
Alternatively, we can first argue that $|{\Rm}| \to 0$ at infinity on $M'$, because otherwise $(M',g',p_i)$ would  converge in the pointed Cheeger-Gromov sense to a cylinder for some $p_i \to \infty$,
which would contradict Claim~\ref{Cl_no_S2R2} (see for example \cite[Corollary 4.1]{naber}). Therefore by \cite[Theorem E(3)]{cds}, $(M',g',f')$ must be the shrinking gradient K\"ahler-Ricci soliton
 of Feldman-Ilmanen-Knopf \cite{FIK}, which contains a $J'$-holomorphic $\mathbb{P}^1$ with self-intersection $-1$. This contradicts Claim~\ref{Cl_no_P1_m1}.
\end{proof}

The following lemma concludes the proof of Theorem \ref{mainthm}.

\begin{Lemma}
There exists a complete shrinking gradient K\"ahler-Ricci soliton with bounded curvature on $\operatorname{Bl}_{x}(\mathbb{C} \times \mathbb{P}^1)$ whose associated flow appears as the pointed blowup limit centered at any point of $E$ of the rescaled flows $g_i(t):=\lambda^{-2}_i g(T+\lambda^{2}_i t),\,t \in [-\lambda_i^{-2} T ,0)$, for every sequence $\lambda_i \to 0$.
\end{Lemma}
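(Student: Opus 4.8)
The plan is to show that for every point $x_0 \in E$ and every sequence $\lambda_i \to 0$ the rescaled flows $g_i(t) = \lambda_i^{-2} g(T + \lambda_i^2 t)$, pointed at $x_0$, subconverge to the canonical flow of a complete shrinking gradient K\"ahler--Ricci soliton with bounded curvature on $\operatorname{Bl}_x(\mathbb{C}\times\mathbb{P}^1)$; together with uniqueness this gives the lemma and completes Theorem~\ref{mainthm}. The argument is modelled on the proof of the preceding (Type~I) lemma. Since $|{\Rm}| \le C/(T-t)$ and the flow is $\kappa$-non-collapsed below a fixed scale (Perelman), the $g_i$ have uniform local curvature and non-collapsing bounds, so by the $\mathbb{F}$-compactness of \cite{Bamler_RF_compactness,Bamler_HK_RF_partial_regularity} --- or by Hamilton's compactness together with Type~I structure theory --- a subsequence converges, based at $x_0$, to a metric soliton all of whose time-slices are homothetic to a shrinking gradient Ricci soliton orbifold $(M',g',f')$. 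Exactly as before, blowups of orbifold points of $M'$ would yield a sequence as in Lemma~\ref{Lem_no_cone}, so $M'$ is smooth; $g'$ is K\"ahler, $R \ge 0$, and $(M',g')$ is recovered as a smooth pointed Cheeger--Gromov limit of rescalings of $g(t)$ through diffeomorphisms $\phi_j$ with $\phi_j^* J \to J'$ locally smoothly.

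The decisive new point is that $g'$ is not flat. I would use that $E$ is a $J$-holomorphic, hence minimal, surface through $x_0$ whose total area $\vol(E,g(t)) = T-t$ is comparable to the squared Type~I scale. If $|{\Rm}| < \eps(T-t)^{-1}$ on $B_{g(t)}(x_0, \tfrac34\sqrt{T-t})$ for small $\eps$, then the monotonicity formula for minimal surfaces with a curvature correction, applied at the point $x_0 \in E$ of area density $\pi$, would force
\[
 \vol\big(E \cap B_{g(t)}(x_0, \tfrac34\sqrt{T-t})\big) \ \ge\ \tfrac{9\pi}{16}(T-t)(1 - C\eps)\ >\ T-t\ =\ \vol(E,g(t)),
\]
which is impossible. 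Hence $\sup_{B_{g(t)}(x_0,\frac34\sqrt{T-t})}|{\Rm}| \ge \eps_0(T-t)^{-1}$ for a universal $\eps_0>0$, all $x_0 \in E$ and all $t<T$, and rescaling gives $\sup_{B_{g'}(x_\infty, 3/4)}|{\Rm}| \ge \eps_0 > 0$. With $R \ge 0$ and $\partial_t R = \triangle R + 2|{\Ric}|^2$, the strong maximum principle then yields $R>0$ on $M'$ (were $R$ to vanish somewhere, one would get $\Ric = 0$ and $\nabla^2 f' = g'$ as in the previous lemma, contradicting non-flatness).

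It remains to carry over the other claims of the Type~I lemma and identify $M'$. A secondary blowup of $(M',g')$ along points going to infinity would, by Lemma~\ref{Lem_cylinder_limit}, be $\mathbb{C}\times\mathbb{P}^1$ or $\mathbb{C}^2$; the first is excluded by the analogue of Claim~\ref{Cl_no_S2R2} via Lemma~\ref{bonita}, the second by $R>0$, so $(M',g')$ has bounded curvature. It is non-compact: a compact shrinker limit would have to be captured as all of $N$, forcing $\vol(M',g') = \lim_i \vol(N, g_i(-1)) = \infty$, whereas $\vol(N, g_i(-1)) = \lambda_i^{-4}\vol(N, g(T-\lambda_i^2))$ is finite for each $i$ and tends to $\infty$ (Lemma~\ref{volume}); alternatively such a limit would be Fano with its unique soliton structure \cite{tianzhu1}, excluded by the $R_{\min}$-estimate as in the Type~I lemma. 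Since $\vol(N,g(t)) \to 0$ (Lemma~\ref{volume}) the flow collapses, so by \cite[Theorem B]{ccd} the limit is not the Feldman--Ilmanen--Knopf soliton. Finally $M'$ is not biholomorphic to $\mathbb{C}\times\mathbb{P}^1$: the $J$-holomorphic rational curves in $N$ whose areas tend to $0$ as $t\to T$ have classes $[E]$, $[D_1]$ and $[D_1]-[E]$, all of rescaled area $\le 2 < \pi$; but if $M'$ were $\mathbb{C}\times\mathbb{P}^1$, perturbing $\phi_j$-images of its foliating (self-intersection $0$, arbitrarily small rescaled area) $\mathbb{P}^1$'s to $J$-holomorphic curves in $N$ would produce such a curve, contradicting Lemma~\ref{bonita}. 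Being a non-compact complete shrinking gradient K\"ahler--Ricci soliton with bounded curvature and $R>0$, neither $\mathbb{C}\times\mathbb{P}^1$ nor Feldman--Ilmanen--Knopf, $(M',g',f')$ is then, by \cite[Theorem A(i)]{ccd} and the identification of the last remaining candidate in \cite{ccd}, biholomorphic to $\operatorname{Bl}_x(\mathbb{C}\times\mathbb{P}^1)$. Since $x_0$ and $(\lambda_i)$ were arbitrary, every such blowup limit is this soliton.

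The hard part is the non-flatness step and the exclusion of $\mathbb{C}\times\mathbb{P}^1$: both hinge on the delicate behaviour of the collapsing exceptional divisor $E$ --- long and thin, so not captured in the limit, yet forcing curvature concentration at every one of its points and constraining which holomorphic curves can survive rescaling --- and this is where the interplay of Lemma~\ref{bonita}, the K\"ahler class computation \eqref{kahlerclass}, and the local complex geometry of $N$ near $E$ must be used carefully.
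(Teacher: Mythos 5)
Your overall architecture (Type~I blowup at a point of $E$, exclusion of orbifold points, non-compactness, bounded curvature, exclusion of Feldman--Ilmanen--Knopf via collapsing, reduction to ``cylinder or $\operatorname{Bl}_x(\mathbb{C}\times\mathbb{P}^1)$'' via \cite{ccd} and \cite{charlie}) matches the paper's, but the decisive step --- ruling out $\mathbb{C}\times\mathbb{P}^1$ as the limit at a point $q\in E$ --- has a genuine gap. Your argument (``the collapsing rational curve classes $[E]$, $[D_1]$, $[D_1]-[E]$ have rescaled area $\le 2<\pi$, so a perturbation of a foliating $\mathbb{P}^1$ of the cylinder would contradict Lemma~\ref{bonita}'') never uses that the basepoint lies on $E$; but the shrinking cylinder genuinely \emph{is} the Type~I blowup limit at points of the collapsing fibre class away from $E$ (e.g.\ along $D_1$), so no basepoint-independent area comparison can succeed. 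Concretely: Lemma~\ref{bonita} at the Type~I scale $\lambda^2=T-t$ only yields $V\ge\pi$, which the cylinder's $\mathbb{P}^1$ (of fixed, not ``arbitrarily small,'' rescaled area --- it is $4\pi$ in honest units, matching the rescaled area of $D_1$) satisfies; your ``$2<\pi$'' compares quantities in inconsistent normalizations. The paper's argument is different: writing $[C_i]=\alpha_{1,i}[D_1]+\alpha_{2,i}[D_2]+\beta_i[E]$ for the perturbed self-intersection-$0$ curve through $q$, boundedness of the rescaled area forces $\alpha_{2,i}=0$, then $0=C_i.C_i=2\alpha_{1,i}\alpha_{2,i}-\beta_i^2$ forces $\beta_i=0$, which contradicts $E.C_i>0$ --- and it is precisely the positivity of the intersection with $E$ at $q$ that uses $q\in E$. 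Without some such input your proof proves too much and hence cannot be correct as stated.

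Two further remarks. First, your non-flatness step via the almost-monotonicity formula for the minimal surface $E$ is a legitimate alternative to the paper's route (the paper simply invokes the Type~I blowup theory of \cite{Sesum_conv_to_soliton,Cao_Zhang_conj_h_e,topping,naber}, which already delivers a non-flat shrinker with positive scalar curvature at singular points; a one-line area argument --- bounded curvature near $q$ would give smooth convergence of $g(t)$ there, contradicting $\vol(E,g(t))\to 0$ --- shows every point of $E$ is singular). However, your version needs an injectivity radius/non-collapsing input for the monotonicity formula to apply at scale $\sqrt{T-t}$, and your numerical margin $\tfrac{9\pi}{16}(T-t)>\vol(E,g(t))$ depends on the normalization of $\vol(E,g(t))$ (it fails if the honest area is $2\pi(T-t)$ rather than $T-t$; one must then enlarge the ball). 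Second, the remaining steps (non-compactness via Lemma~\ref{volume}, bounded curvature via secondary blowups and Lemma~\ref{Lem_cylinder_limit}, exclusion of Feldman--Ilmanen--Knopf) are essentially the paper's and are fine.
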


\begin{proof}
Let $q\in E$ and consider the pointed K\"ahler-Ricci flows $(N,\,g_{i}(t),\,q)$. By the previous lemma, the K\"ahler-Ricci flow is Type I, therefore from \cite{Sesum_conv_to_soliton,Cao_Zhang_conj_h_e,topping, naber} we know that a subsequence of the flows
converges smoothly to a pointed flow $(M_\infty, (g_\infty(t)), q_\infty)$, which is the flow associated to the shrinking
gradient Ricci soliton $(M_\infty,g_\infty := g_\infty(-1), f_\infty)$ with strictly positive scalar curvature
for some smooth soliton potential $f_\infty$.
Moreover, we have smooth pointed Cheeger-Gromov convergence of the time-slices $(M, g_i(-1) = \lambda^{-2}_i g(T-\lambda^{2}_i),q)$ to $(M_\infty, g_\infty, q_\infty)$.
Thus, $(M_\infty, g_\infty)$ is K\"ahler with respect to some complex structure $J_\infty$.

If $M_\infty$ is compact, then by the smooth Cheeger-Gromov convergence we have that
\[ \lambda_i^{-4} \vol (N, g(T-\lambda_i^2)) =  \vol (N,\lambda_i^{-2} g(T- \lambda_i^2)) \xrightarrow[i \to \infty]{} \vol (M_\infty, g_\infty) < \infty. \]
This contradicts Lemma~\ref{volume} and so $M_\infty$ is non-compact.

Next we claim that there is an integral curve of $\nabla f_\infty$ along which the scalar curvature on $M_\infty$ does not tend to zero.
If this were not the case, then the scalar curvature would tend to zero globally \cite[Lemma 2.7]{ccd}, hence being non-flat,
$(M_\infty, g_\infty, f_\infty)$ would be the $U(2)$-invariant shrinking gradient K\"ahler-Ricci soliton of Feldman-Ilmanen-Knopf \cite{FIK}; see \cite[Theorem E(3)]{cds}.
This in turn would imply that $\operatorname{vol}_{t\to T^{-}}(N,\,g(t))>0$ \cite[Theorem B]{ccd} in contradiction to Lemma~\ref{volume}.

\cite[Theorem A]{ccd} and \cite[Corollary C]{charlie} now apply and tell us that $(M_\infty, g_\infty)$ is either the cylinder $\mathbb{C}\times\mathbb{P}^{1}$ or
a shrinking gradient K\"ahler-Ricci soliton on $\operatorname{Bl}_{x}(\mathbb{C} \times \mathbb{P}^1)$ with bounded curvature.
Thus, to prove the lemma, it suffices to rule out the case $\mathbb{C}\times\mathbb{P}^{1}$.
So for sake of a contradiction, assume that $(M_\infty, g_\infty)$ is the cylinder $\mathbb{C}\times\mathbb{P}^{1}$.
Then by the smooth Cheeger-Gromov convergence, there is an exhaustion $U_1 \subset U_2 \subset \ldots \subset M_\infty$ and a sequence of diffeomorphisms onto their image $\psi_i : U_i \to N$ such that $\psi_i^* g_i(-1)$ converges to $g_\infty$ locally smoothly and such that $\psi_i (q_\infty) = q$.
Let $J_i := \psi_i^* J$, where $J$ denotes the complex structure on $N$.
Then, as we have seen in the proof of Lemma \ref{bonita}, we may assume without loss of generality that $J_{i}\to J_\infty$ locally smoothly as $i\to\infty$, where $J_\infty$ denotes the complex structure on $\IC \times \mathbb{P}^1$.

Let $\widehat C_\infty = \{ z \} \times \mathbb{P}^1 \subset M_\infty$, $z\in\mathbb{C}$, denote the unique $J_\infty$-holomorphic curve passing through $q_\infty$ with trivial self-intersection. Then by \cite[Corollary 2.3]{ccd}, there exists a sequence of $J_{i}$-holomorphic curves $\widehat C_{i}$ with $q_\infty \in \widehat C_{i}$ and $\widehat C_{i}. \widehat C_{i}=0$ converging smoothly to $\widehat C_\infty$ as $i\to\infty$. (Here we have applied
\cite[Proposition 3.3.5 and Section B.4]{dusa2} to deduce the smooth convergence.)  This yields a sequence of $J$-holomorphic curves $C_{i}:=\psi_{i}(\widehat C_{i})$ in $N$ with $C_i . C_i = 0$. Using this together with the evolution $[\omega(t)]$ of the K\"ahler class as dictated by \eqref{hermosa}, we compute that
\begin{equation}\label{contradiction}
\begin{split}
\operatorname{vol}(\widehat C_{i},\psi_{i}^{*}g_{i}(-1))&=\operatorname{vol}(C_{i},\lambda^{-2}_{i}g(T-\lambda^{2}_{i} ))
=\lambda^{-2}_{i}\operatorname{vol}(C_{i},g(T-\lambda^{2}_{i} ))\\
&=\lambda^{-2}_{i}\int_{C_{i}}\left[\left.\omega(T-\lambda^{2}_{i})\right|_{C_{i}}\right]\\
&=\lambda^{-2}_{i}\int_{C_{i}}\left(\left[\left.\omega(0)\right|_{C_{i}}\right]-2\pi(T-\lambda^{2}_{i})c_{1}\left(-K_{N}|_{C_{i}}\right)\right)\\
&=\lambda^{-2}_{i}\int_{C_{i}}\left(\left[\left.\omega(0)\right|_{C_{i}}\right]-2\pi Tc_{1}\left(-K_{N}|_{C_{i}}\right)\right)+2\pi \int_{C_{i}}c_{1}\left(-K_{N}|_{C_i}\right)\\
&=\lambda^{-2}_{i}\lim_{s\to T^{-}}\operatorname{vol}(C_{i},{g(s)})+4\pi .\\
\end{split}
\end{equation}
Since the left-hand side of \eqref{contradiction} converges to $\vol(\widehat C_\infty, g_\infty) = 2\pi$ as $i \to \infty$, this implies that
\begin{equation} \label{eq_limlim}
 \lim_{i \to \infty} \lim_{s\to T^{-}}\operatorname{vol}(C_{i},{g(s)}) = 0.
\end{equation}

Write $[C_{i}]=\alpha_{1,i}[D_{1}]+\alpha_{2,i}[D_{2}]+\beta_{i}[E]$ for $\alpha_{1,i},\alpha_{2,i},\beta_{i} \in \IZ$.
Using \eqref{kahlerclass}, we compute that for $s\in[0,\,T),$
\begin{equation*}
\operatorname{vol}(C_{i},g(s))=\alpha_{1,i}(a_{2}-2s)+\alpha_{2,i}(a_{1}-2s)+\beta_{i}(b-s)
\xrightarrow[s\to T^{-}]\,\alpha_{2,\,1}(a_{1}-a_{2}),
\end{equation*}
where we recall that $T=\frac{a_{2}}{2} = b$ and $a_{1}>a_{2}$.
Combining this with \eqref{eq_limlim}, the fact that $\alpha_{2,i} \in \IZ$ implies that $\alpha_{2,i} = 0$ for large $i$.

Now, the fact that $C_{i}.C_{i}=0$ implies that for large $i$,
\[ \beta_i^2 = 2\alpha_{1,i}\alpha_{2,i} = 0. \]
However, since $C_i$ intersects $E$ in $q$, we know that
\[ 0 < E . C_i  = \beta_i. \]
This yields the desired contradiction.
\end{proof}

\bibliographystyle{amsalpha}

\bibliography{bibliography}

\end{document}